\theoremstyle{plain}
\newtheorem{thm}{Theorem}
\theoremstyle{definition}
\newaliascnt{defn}{thm}
\newtheorem{defn}[defn]{Definition}
\theoremstyle{remark}
\newaliascnt{rem}{thm}
\newtheorem{rem}[rem]{Remark}
\theoremstyle{plain}
\newaliascnt{fact}{thm}
\theoremstyle{plain}
\newaliascnt{lem}{thm}
\newtheorem{lem}[lem]{Lemma}
\theoremstyle{plain}
\newaliascnt{prop}{thm}
\newtheorem{prop}[prop]{Proposition}
\theoremstyle{plain}
\newaliascnt{proc}{thm}
\theoremstyle{plain}
\newaliascnt{cor}{thm}
\newtheorem{cor}[cor]{Corollary}
\theoremstyle{definition}
\newaliascnt{example}{thm}
\theoremstyle{plain}
\newaliascnt{conv}{thm}
\newtheorem{conv}[conv]{Convention}
\def\equationautorefname~#1\null{(#1)\null}
\newcommand{\A}{\mathcal A} 
\newcommand{\B}{\mathcal B} 
\newcommand{\C}{\mathcal C} 
\newcommand{\D}{\mathcal D} 
\newcommand{\F}{\mathcal F} 
\renewcommand{\H}{\mathcal H} 
\newcommand{\X}{\mathcal X} 
\newcommand{\Y}{\mathcal Y} 
\newcommand{\Z}{\mathcal Z} 
\newcommand{\K}{\mathcal K} 
\newcommand{\R}{\mathcal R} 
\renewcommand{\P}{\mathcal P} 
\newcommand{\AP}{\widetilde{\mathcal A}}
\newcommand{\CP}{\widetilde{\mathcal C}}
\newcommand{\XP}{\widetilde{\mathcal X}}
\newcommand{\ap}{\widetilde a}
\newcommand{\cp}{\widetilde c}
\newcommand{\xp}{\widetilde x}
\newcommand{\ab}{\overline a}
\newcommand{\cb}{\overline c}
\newcommand{\xb}{\overline x}
\newcommand{\AB}{\overline{\mathcal A}}
\newcommand{\CB}{\overline{\mathcal C}}
\newcommand{\XB}{\overline{\mathcal X}}
\newcommand{\PA}[4]{\mathcal{PA}_{#1;#2}^{\left(#3;#4\right)}}
\newcommand{\CA}[3]{\mathcal{CA}_{#1}^{\left(#2;#3\right)}}
\newcommand{\VA}[3]{\mathcal{VA}_{#1;#2}^{\left(#3\right)}}
\newcommand{\PVA}[3]{\mathcal{PVA}_{#1;#2}^{\left(#3\right)}}
\newcommand{\AR}[4]{\mathcal{AR}_{#1;#2,#3}^{\left(#4\right)}}
\begin{document}
\let\ref\autoref

\title{Methods of Enumerating Two Vertex Maps of Arbitrary Genus}

\author{Aaron Chun Shing Chan}

\date{December 5, 2016}
\begin{abstract}
This paper provides an alternate proof to parts of the Goulden-Slofstra
formula \cite{Goulden-Slofstra:2010} for enumerating two vertex maps
by genus, which is an extension of the famous Harer-Zagier formula
\cite{Harer-Zagier:1986} that computes the Euler characteristic of
the moduli space of curves. This paper also shows a further simplification
to the Goulden-Slofstra formula. Portions of this alternate proof
will be used in a subsequent paper \cite{Chan:2017-2}, where it forms
the basis for a more general result that applies for a certain class
of maps with an arbitrary number of vertices.
\end{abstract}

\maketitle

\section{\label{sec:Introduction}Introduction}

Let $S$ be a set of even cardinality. A \emph{pairing} $\mu$ of
$S$ is a partition of $S$ into disjoint subsets of size 2. In the
context of permutations, $\mu$ can be seen as a \emph{fixed-point
free involution}, where every cycle of $\mu$ is of size 2. Now, let
$p=2q$ be a positive integer. We use $\left[p\right]$ to denote
the set $\left\{ 1,\dots,p\right\} $, and $\P_{p}$ to be the set
of all pairings of $\left[p\right]$. If $\gamma_{p}$ is the canonical
cycle permutation of $\left[p\right]$, given by $\gamma_{p}=\left(1,\dots,p\right)$,
we have the following theorem by Harer-Zagier on the Euler characteristic
of the moduli space of curves.
\begin{thm}
(Harer-Zagier \cite{Harer-Zagier:1986}) \label{thm:Harer-Zagier}
Let $q$ be a positive integer, and $\A_{L}^{\left(q\right)}$ be
the subset of pairings of $\P_{2q}$ such that for $\mu\in\A_{L}^{\left(q\right)}$,
$\mu\gamma_{2q}^{-1}$ has exactly $L$ cycles. If we let $a_{L}^{\left(q\right)}=\left|\A_{L}^{\left(q\right)}\right|$,
then the generating series for $a_{L}^{\left(q\right)}$ is given
by 
\[
A^{\left(q\right)}\left(x\right)=\left(2q-1\right)!!\sum_{k\ge1}2^{k-1}\binom{q}{k-1}\binom{x}{k}
\]
where $\left(2k-1\right)!!=\prod_{j=1}^{k}\left(2j-1\right)$ is the
double factorial.
\end{thm}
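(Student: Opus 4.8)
The plan is to evaluate $A^{(q)}(x) = \sum_{\mu\in\P_{2q}} x^{c(\mu\gamma_{2q}^{-1})}$, where $c$ denotes the number of cycles, via the character theory of the symmetric group $S_n$ with $n=2q$. Grouping the pairings $\mu$ by the cycle type $\alpha$ of $\mu\gamma_{2q}^{-1}$ gives $A^{(q)}(x) = \sum_{\alpha\vdash n} N_\alpha\,x^{\ell(\alpha)}$, where $\ell(\alpha)$ is the number of parts of $\alpha$ and $N_\alpha$ is the number of $\mu\in\P_{2q}$ with $\mu\gamma_{2q}^{-1}$ of type $\alpha$. Since $\mu = (\mu\gamma_{2q}^{-1})\gamma_{2q}$, the quantity $N_\alpha$ counts the ways to write the fixed $n$-cycle $\gamma_{2q}^{-1}$ as a product of an element of type $\alpha$ and a fixed-point-free involution, so the Frobenius formula gives $N_\alpha = \frac{|C_\alpha|\,|C_{2^q}|}{n!}\sum_{\lambda\vdash n}\frac{\chi^\lambda_\alpha\chi^\lambda_{2^q}\chi^\lambda_{(n)}}{\dim\lambda}$, where $C_\alpha$ and $C_{2^q}$ denote the corresponding conjugacy classes and $\chi^\lambda_\beta$ is the value of the irreducible character $\chi^\lambda$ on cycle type $\beta$.

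Two standard facts collapse this. First, because $\gamma_{2q}$ is an $n$-cycle, $\chi^\lambda_{(n)}$ vanishes unless $\lambda=(n-j,1^j)$ is a hook, where it equals $(-1)^j$; so only the $n$ hooks survive. Second, interchanging the summations and using the identity $\sum_{\pi\in S_n}\chi^\lambda(\pi)\,x^{c(\pi)} = n!\,s_\lambda(1^x)$ — immediate from the power-sum expansion $s_\lambda=\sum_\mu z_\mu^{-1}\chi^\lambda_\mu p_\mu$ together with $p_\beta(1^x)=x^{\ell(\beta)}$, where $s_\lambda(1^x)$ is the principal specialization (a polynomial in $x$) — one obtains $A^{(q)}(x)=|C_{2^q}|\sum_{j=0}^{n-1}(-1)^j\,\frac{\chi^{(n-j,1^j)}_{2^q}}{\dim(n-j,1^j)}\,s_{(n-j,1^j)}(1^x)$. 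Now $|C_{2^q}|=(2q-1)!!$, $\dim(n-j,1^j)=\binom{n-1}{j}$, and the hook--content formula gives $s_{(n-j,1^j)}(1^x)=\binom{n-1}{j}\binom{x+n-1-j}{n}$, so the two factors $\binom{n-1}{j}$ cancel and
\[
A^{(q)}(x)=(2q-1)!!\sum_{j=0}^{n-1}(-1)^j\,\chi^{(n-j,1^j)}_{2^q}\binom{x+n-1-j}{n}.
\]

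It remains to compute the hook characters on the class of fixed-point-free involutions and to recognise the resulting sum. For the characters I would use the Murnaghan--Nakayama rule: a border strip of size $2$ removed from a hook $(a,1^b)$ either strips the last two boxes of the first row (sign $+$, shape $(a-2,1^b)$) or of the first column (sign $-$, shape $(a,1^{b-2})$); unwinding this recursion — equivalently, using $s_{(a,1^b)}=\sum_i(-1)^i h_{a+i}e_{b-i}$ and the easily evaluated value of $h_ae_b$ on the class $2^q$ — yields $\chi^{(n-j,1^j)}_{2^q}=(-1)^j\sum_{s=0}^{\lfloor j/2\rfloor}(-1)^s\binom{q}{s}$. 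Substituting this (the two signs $(-1)^j$ cancel), the theorem reduces to the polynomial identity $\sum_{j=0}^{n-1}\big(\sum_{s\le j/2}(-1)^s\binom{q}{s}\big)\binom{x+n-1-j}{n}=\sum_{k\ge1}2^{k-1}\binom{q}{k-1}\binom{x}{k}$; expanding the left side in the basis $\{\binom{x}{k}\}$ by Vandermonde, interchanging the sums, and telescoping (hockey stick) reduces the coefficient check to the one-line generating-function identity $(1+2t)^q=(1+t)^{2q}\bigl(1-\tfrac{t^2}{(1+t)^2}\bigr)^q$.

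I expect the main obstacle to be exactly this last bundle of manipulations — keeping the alternating inner sums, the shifts $x+n-1-j$, and the Vandermonde re-expansion straight, and nailing down the hook-character value $\chi^{(n-j,1^j)}_{2^q}$ cleanly — since everything preceding it is formal. (An entirely different route, close to Harer and Zagier's original, observes that $\mathbb{E}[\operatorname{Tr}M^{2q}]=A^{(q)}(N)$ for an $N\times N$ GUE matrix $M$ by Wick's theorem and recomputes this expectation through the Hermite-polynomial form of the eigenvalue density; there the Hermite-moment computation and the sum over the $N$ Hermite indices is the hard step, and the binomials $\binom{N}{k}$ with coefficients $2^{k-1}\binom{q}{k-1}$ fall out of it.)
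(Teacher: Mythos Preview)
The paper does not prove this theorem. Theorem~1 is stated in the introduction as background, attributed to Harer and Zagier, and followed only by a survey sentence listing references (Goulden--Nica, Itzykson--Zuber, Jackson, Kerov, Kontsevich, Lass, Penner, Zagier) where various proofs may be found. The paper's own work begins with the two-vertex extension (Theorem~2) and never returns to the one-vertex case, so there is no ``paper's proof'' to compare against.

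That said, your argument is a correct and complete outline of the character-theoretic proof, essentially the approach of Jackson (one of the references the paper cites). The chain Frobenius~$\to$ hooks only~$\to$ principal specialization of hook Schur functions is assembled correctly: the Frobenius count, the values $\chi^\lambda_{(n)}=(-1)^j$ on hooks, $\dim(n-j,1^j)=\binom{n-1}{j}$, and $s_{(n-j,1^j)}(1^x)=\binom{n-1}{j}\binom{x+n-1-j}{n}$ are all right, and your Murnaghan--Nakayama evaluation $\chi^{(n-j,1^j)}_{2^q}=(-1)^j\sum_{s\le j/2}(-1)^s\binom{q}{s}$ checks out (I verified $q=1,2$). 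The closing binomial identity does reduce, after Vandermonde in $\binom{x+n-1-j}{n}$ and a hockey-stick telescoping, to extracting coefficients from $(1+2t)^q=((1+t)^2-t^2)^q$, exactly as you say. Your self-assessment is accurate: the only place a grader would push back is the bookkeeping in that last step, which you have sketched rather than written out; the structural part of the proof is solid.

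By contrast, the machinery the paper \emph{does} develop---paired/arrowed arrays, the substructure formula of Theorem~\ref{thm:Substructure general formula}, and the Pfaff--Saalsch\"utz summation in Theorem~\ref{thm:Vertical Array Formula}---is purely bijective/hypergeometric and tailored to the two-vertex problem with $s\ge1$ mixed edges; it is a genuinely different circle of ideas from your representation-theoretic route and is not invoked for the one-vertex statement at all.
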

There are numerous proofs of this formula in the literature, both
algebraic and combinatorial. A selection of the proofs can be found
in the papers by Goulden and Nica \cite{Goulden-Nica:2005}, Itzykson
and Zuber \cite{Itzykson-Zuber:1990}, Jackson \cite{Jackson:1994},
Kerov \cite{Kerov:1999}, Kontsevich \cite{Kontsevich:1992}, Lass
\cite{Lass:2001}, Penner \cite{Penner:1988}, and Zagier \cite{Zagier:1995}.
As seen in Lando and Zvonkin \cite{Lando-Zvonkin:2004}, the Harer-Zagier
formula enumerates 1-celled embeddings on an orientable surface by
genus, which are equivalent to one vertex maps with $q$ loop edges.
The original proof of Harer-Zagier uses matrix integration, and there
are numerous other algebraic proofs for this same result. Some subsequent
proofs used purely combinatorial approaches, such as the use of Eulerian
tours by Lass, and the use of trees by Goulden and Nica.

Next, we will set up the terminology for the Goulden and Slofstra
result, which is an extension of the Harer-Zagier formula. Let $p,n\ge1$.
We use $\left[p\right]^{\underline{n}}$ to denote the set $\left\{ 1^{^{\underline{n}}},2^{^{\underline{n}}},\dots,p^{^{\underline{n}}}\right\} $,
whose elements $i^{\underline{n}}$, $i=1,\dots,p$, are regarded
as a labelled version of the integer $i$, labelled by the ``$\underline{n}$''
in the superscript position. Then, suppose $p_{1}$ and $p_{2}$ are
positive integers, we let $\left[p_{1},p_{2}\right]$ to be the set
$\left[p_{1}\right]^{\underline{1}}\cup\left[p_{2}\right]^{\underline{2}}$.
For example, $\left[3,5\right]$ is the set $\left\{ 1^{^{\underline{1}}},2^{^{\underline{1}}},3^{^{\underline{1}}},1^{^{\underline{2}}},2^{^{\underline{2}}},3^{^{\underline{2}}},4^{^{\underline{2}}},5^{^{\underline{2}}}\right\} $.
Furthermore, if $p_{1}+p_{2}$ is even, then the set of all pairings
of $\left[p_{1},p_{2}\right]$ is denoted as $\P_{p_{1},p_{2}}$.
Now, if $\mu$ is a pairing of $\left[p_{1},p_{2}\right]$, then a
pair $\left\{ x^{\underline{i}},y^{\underline{k}}\right\} $ in $\mu$
is a \emph{mixed pair} if $i\neq k$, and a \emph{non-mixed pair}
otherwise. To describe the number of mixed and non-mixed pairs in
a pairing $\mu$, we introduce the parameters $q_{1}$, $q_{2}$,
and $s$. Let $q_{1},q_{2}\ge0$ and $s>0$ such that $p_{i}=2q_{i}+s$
for $i=1,2$. We define $\P^{\left(q_{1},q_{2};s\right)}\subseteq\P_{p_{1},p_{2}}$
to be the subset of the pairing such that for $\mu\in\P^{\left(q_{1},q_{2};s\right)}$,
$\mu$ has $q_{i}$ non-mixed pairs of the form $\left\{ x^{\underline{i}},y^{\underline{i}}\right\} $
and $s$ mixed pairs. If $\gamma_{p_{1},p_{2}}$ is the canonical
cycle permutation of $\left[p_{1},p_{2}\right]$, given by $\gamma_{p_{1},p_{2}}=\left(1^{^{\underline{1}}},\dots,p_{1}^{^{\underline{1}}}\right)\left(1^{^{\underline{2}}},\dots,p_{2}^{^{\underline{2}}}\right)$,
then the series that enumerates the number of two vertex maps according
to the genus is given as follows.
\begin{thm}
(Goulden-Slofstra \cite{Goulden-Slofstra:2010}) \label{thm:Goulden-Slofstra}
Let $q_{1}$ and $q_{2}$ be non-negative integers, and $s$ be a
positive integer. Let $\A_{L}^{\left(q_{1},q_{2};s\right)}$ be the
subset of pairings of $\P^{\left(q_{1},q_{2};s\right)}$ such that
for $\mu\in\A_{L}^{\left(q_{1},q_{2};s\right)}$, $\mu\gamma_{p_{1},p_{2}}^{-1}$
has exactly $L$ cycles. If we let $a_{L}^{\left(q_{1},q_{2};s\right)}=\left|\A_{L}^{\left(q_{1},q_{2};s\right)}\right|$,
then the generating series for $a_{L}^{\left(q_{1},q_{2};s\right)}$
is given by 
\[
A^{\left(q_{1},q_{2};s\right)}\left(x\right)=p_{1}!p_{2}!\sum_{k=1}^{d+1}\sum_{i=0}^{\left\lfloor \frac{1}{2}p_{1}\right\rfloor }\sum_{j=0}^{\left\lfloor \frac{1}{2}p_{2}\right\rfloor }\frac{1}{2^{i+j}i!j!\left(d-i-j\right)!}\binom{x}{k}\binom{d-i-j}{k-1}\Delta_{k}^{\left(q_{1},q_{2};s\right)}
\]
where $p_{1}=2q_{1}+s$, $p_{2}=2q_{2}+s$, $d=q_{1}+q_{2}+s$, and
\[
\Delta_{k}^{\left(q_{1},q_{2};s\right)}=\binom{k-1}{q_{1}-i}\binom{k-1}{q_{2}-j}-\binom{k-1}{q_{1}+s-i}\binom{k-1}{q_{2}+s-j}
\]
\end{thm}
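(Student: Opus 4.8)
To establish the formula, the plan is to prove it for every positive integer $x=N$ and then invoke that both sides are polynomials in $x$, so that agreement at infinitely many points forces the identity. For $N\in\mathbb{Z}_{>0}$,
\[
A^{\left(q_{1},q_{2};s\right)}\left(N\right)=\sum_{L}a_{L}^{\left(q_{1},q_{2};s\right)}N^{L}=\sum_{\mu\in\P^{\left(q_{1},q_{2};s\right)}}N^{\#\mathrm{cyc}\left(\mu\gamma_{p_{1},p_{2}}^{-1}\right)},
\]
which enumerates pairs $\left(\mu,c\right)$ where $\mu$ is a pairing of the prescribed type and $c$ colours the cycles of $\mu\gamma_{p_{1},p_{2}}^{-1}$, i.e.\ the faces of the associated two-vertex map, with $N$ colours. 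Splitting the count according to which colours are actually used replaces it by $\sum_{k}\binom{N}{k}\,b_{k}$, where $b_{k}$ counts two-vertex maps of the given type carrying a surjective $k$-colouring of their faces; hence $A^{\left(q_{1},q_{2};s\right)}\left(x\right)=\sum_{k}b_{k}\binom{x}{k}$ as polynomials, and it remains to show $b_{k}=p_{1}!\,p_{2}!\sum_{i,j}\frac{1}{2^{i+j}i!j!\left(d-i-j\right)!}\binom{d-i-j}{k-1}\Delta_{k}^{\left(q_{1},q_{2};s\right)}$, with $i,j$ ranging as in the statement. (The term $k=d+1$ then has to contribute $0$, consistent with the fact that a connected two-vertex map with $d$ edges has at most $d$ faces.)

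The next step is to encode a face-coloured two-vertex map as a combinatorial ``arrangement'': read the half-edges around each vertex in rotational order, record for every half-edge the colour of the corner immediately following it, and record the pairing of half-edges into edges. Passing between maps and arrangements is carried out through a short chain of auxiliary families --- ``linear'', ``pointed'', and ``cyclic'' arrangements --- in which one alternately rigidifies a vertex by choosing a distinguished starting half-edge and then forgets that choice, and in which the non-mixed pairs of each type are separated into ``internal'' ones (there being $i$ of type $1$ and $j$ of type $2$) and the rest. The cumulative bookkeeping of this chain produces the prefactor $p_{1}!\,p_{2}!$, restoring the half-edge labels of the two vertices, and the denominators $2^{i+j}i!j!\left(d-i-j\right)!$, coming from unordered choices of the internal pairs (each carrying a factor $2$ for the order of its two endpoints) together with an ordering of the remaining $d-i-j=\left(q_{1}-i\right)+\left(q_{2}-j\right)+s$ pairs; distributing those pairs among the $k-1$ gaps between the $k$ consecutive colour classes along the boundary is the composition-type count $\binom{d-i-j}{k-1}$. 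The single-vertex parts of this analysis run parallel to the combinatorics underlying \ref{thm:Harer-Zagier}.

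The factor $\Delta_{k}^{\left(q_{1},q_{2};s\right)}$ encodes the mixed-pair incidences and the connectivity of the map. If one dropped the requirement that the $s$ mixed pairs act as genuine edges joining the two vertices, the number of admissible ways to place the type-$1$ and type-$2$ data against the $k-1$ gaps would split as a product $\binom{k-1}{q_{1}-i}\binom{k-1}{q_{2}-j}$; the configurations that fail this requirement are matched, by a reflection exchanging the $s$ mixed pairs with non-mixed pairs and thereby shifting $q_{1}\mapsto q_{1}+s$ and $q_{2}\mapsto q_{2}+s$, with the set enumerated by $\binom{k-1}{q_{1}+s-i}\binom{k-1}{q_{2}+s-j}$. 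Subtracting gives $\Delta_{k}^{\left(q_{1},q_{2};s\right)}$; summing over $i$ and $j$ and collecting the factors yields the stated expression for $b_{k}$.

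I expect the main obstacle to be exactly this last point: setting up the arrangement model so that the ``bad'' configurations have a clean description, and then exhibiting the reflection bijection that produces \emph{precisely} the shift $\left(q_{1},q_{2}\right)\mapsto\left(q_{1}+s,q_{2}+s\right)$ --- rather than some other correction --- while respecting the colour classes and the $\left(i,j\right)$ split. A secondary, more technical difficulty is checking that the chain of intermediate bijections accounts for every automorphism factor --- the powers of $2$ and the factorials --- with no over- or under-counting, particularly in the small-$s$ and low-genus boundary cases.
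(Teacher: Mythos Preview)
Your opening reduction is exactly right and matches the paper: the passage from $A^{(q_1,q_2;s)}(x)$ to $\sum_k b_k\binom{x}{k}$ via face-coloured maps is precisely the paired-surjection setup (\ref{def:Paired Functions}, \ref{prop:Paired surjection formula}). From that point on, however, the two routes diverge sharply. The paper does not attempt to read off $\Delta_k$ from a single bijection. Instead it (following Goulden--Slofstra) strips the labels to get \emph{canonical arrays}, then peels off the non-mixed pairs row by row to reduce to \emph{vertical arrays} with marking parameters $R_i=q_i-t_i+1$ (\ref{thm:Canonical Vertical Formula}); this is where the factors $p_1!\,p_2!/(2^{t_1+t_2}t_1!t_2!\cdots)$ appear. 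The new contribution of the paper is then to count proper vertical arrays by introducing \emph{arrowed arrays}, classifying columns into eight types, proving an explicit closed form $T(\Gamma)$ for a fixed substructure by induction (\ref{thm:Substructure general formula}), and finally summing over substructures and applying the Chu--Vandermonde and Pfaff--Saalsch\"utz identities (\ref{thm:Substructure Omega General Formula}, \ref{thm:Vertical Array Formula}). The difference $\Delta_k$ emerges algebraically: it is the boundary term left over when one extends a finite hypergeometric sum by one step before collapsing it.

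The gap in your proposal is the ``reflection bijection''. You assert that the bad configurations --- those where the $s$ mixed pairs fail to connect the two vertices --- are in bijection with objects counted by $\binom{k-1}{q_1+s-i}\binom{k-1}{q_2+s-j}$, via ``exchanging the $s$ mixed pairs with non-mixed pairs''. But a mixed pair has one end in each row, so turning it into a non-mixed pair cannot simultaneously increment the row-$1$ count by $s$ \emph{and} the row-$2$ count by $s$; the shift $(q_1,q_2)\mapsto(q_1+s,q_2+s)$ is not what any na\"{\i}ve swap produces. In the paper (and in the original Goulden--Slofstra argument) this subtraction is not bijective at all: it arises from the Pfaff--Saalsch\"utz evaluation of a ${}_3F_2$, and the subtracted term is the single extraneous summand $A=s$ that one adds and removes to make the sum balanced. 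Unless you can actually exhibit the involution --- and you yourself flag this as the main obstacle --- the argument is a plausibility sketch, not a proof. The intermediate ``linear/pointed/cyclic arrangement'' chain is likewise only gestured at; getting $\binom{d-i-j}{k-1}$ out of it requires a specific ordered model that you have not specified.
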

In this expression, $p_{1}$ and $p_{2}$ are the degrees of vertices
1 and 2, respectively, and $d$ is the total number of pairs in the
pairing. Similar to the Harer-Zagier formula, the Goulden-Slofstra
formula counts the number of combinatorial maps with 2 vertices by
genus, where there are $q_{1}$ and $q_{2}$ loop edges on vertices
1 and 2, and $s$ edges between the two vertices. To represent these
maps, Goulden and Slofstra used a combinatorial object called the
paired surjections, which we will define in the next section.

\section{\label{sec:Paired Arrays}Definitions and Terminology of Paired Arrays}

In this section, we will mostly follow the methodology of Goulden
and Slofstra \cite{Goulden-Slofstra:2010}. For that reason, we will
not be providing proofs for the results stated, and skip over some
of their constructions. However, we will be defining some terminology
of our own, so that we can extend their approach later. Note that
our notation in this paper is generally different from that of Goulden
and Slofstra, as it makes it easier to refer to the results in the
follow up paper \cite{Chan:2017-2} that covers multiple vertices.
\begin{defn}
\label{def:Paired Functions}Let $K,s\ge1$, $q_{1},q_{2}\ge0$, and
$p_{i}=2q_{i}+s$ for $i=1,2$. An ordered pair $\left(\mu,\pi\right)$
is a \emph{paired surjection} if $\mu\in\P^{\left(q_{1},q_{2};s\right)}$
and $\pi\colon\left[p_{1},p_{2}\right]\rightarrow\left[K\right]$
is a surjection satisfying 
\[
\pi\left(\mu\left(v\right)\right)=\pi\left(\gamma_{p_{1},p_{2}}\left(v\right)\right)\quad\mbox{ for all }v\in\left[p_{1},p_{2}\right]
\]
We denote the set of paired surjection satisfying the parameters $K$,
$q_{1}$, $q_{2}$, and $s$ as $\F_{K}^{\left(q_{1},q_{2};s\right)}$,
and we let $f_{K}^{\left(q_{1},q_{2};s\right)}=\left|\F_{K}^{\left(q_{1},q_{2};s\right)}\right|$.
\end{defn}
We can then express the generating series $A^{\left(q_{1},q_{2};s\right)}\left(x\right)$
using paired surjections as follows.
\begin{prop}
(Goulden-Slofstra \cite{Goulden-Slofstra:2010}) \label{prop:Paired surjection formula}For
$q_{1},q_{2}\ge0$ and $s\ge1$, we have
\begin{eqnarray*}
A^{\left(q_{1},q_{2};s\right)}\left(x\right) & = & \sum_{K\ge1}f_{K}^{\left(q_{1},q_{2};s\right)}\binom{x}{K}
\end{eqnarray*}
\end{prop}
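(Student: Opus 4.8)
The plan is to prove this identity of polynomials in $x$ by verifying it at every positive integer $x=N$, where both sides can be recognized as counts of one and the same finite set. Fix $N\ge 1$ and let $\T_N$ denote the set of pairs $(\mu,\pi)$ with $\mu\in\P^{(q_1,q_2;s)}$ and $\pi\colon[p_1,p_2]\to[N]$ satisfying the compatibility relation $\pi(\mu(v))=\pi(\gamma_{p_1,p_2}(v))$ for all $v$, as in \ref{def:Paired Functions}, except that $\pi$ is not required to be surjective. The first and key step is to reformulate this relation: substituting $\gamma_{p_1,p_2}^{-1}(v)$ for $v$ turns it into $\pi(\mu\gamma_{p_1,p_2}^{-1}(v))=\pi(v)$ for all $v$, that is, $\pi$ is invariant under the permutation $\mu\gamma_{p_1,p_2}^{-1}$, which is the same as saying that $\pi$ is constant on each cycle of $\mu\gamma_{p_1,p_2}^{-1}$. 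Hence, once $\mu\in\A_L^{(q_1,q_2;s)}$ is fixed, the compatible functions $\pi$ are exactly the arbitrary functions from the $L$-element set of cycles of $\mu\gamma_{p_1,p_2}^{-1}$ into $[N]$, and there are $N^L$ of them.

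The next step is the double count of $\T_N$. Grouping first by $\mu$ and using the previous step,
\[
|\T_N| \;=\; \sum_{\mu\in\P^{(q_1,q_2;s)}} N^{c(\mu\gamma_{p_1,p_2}^{-1})} \;=\; \sum_{L\ge 1} a_L^{(q_1,q_2;s)}N^L \;=\; A^{(q_1,q_2;s)}(N),
\]
where $c(\cdot)$ is the number of cycles and the last equality recalls that $A^{(q_1,q_2;s)}(x)=\sum_L a_L^{(q_1,q_2;s)}x^L$ is the generating series of \ref{thm:Goulden-Slofstra}. Grouping instead by the cardinality $K$ of the image $\pi([p_1,p_2])$: there are $\binom NK$ choices of this image as a $K$-subset $I\subseteq[N]$, and post-composing $\pi$ with the order-preserving bijection $I\to[K]$ is a bijection from the set of pairs in $\T_N$ with image exactly $I$ onto $\F_K^{(q_1,q_2;s)}$; therefore $|\T_N|=\sum_{K\ge 1}\binom NK f_K^{(q_1,q_2;s)}$. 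Equating the two expressions gives $A^{(q_1,q_2;s)}(N)=\sum_{K\ge 1}f_K^{(q_1,q_2;s)}\binom NK$ for every positive integer $N$.

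Finally, only finitely many of the $f_K^{(q_1,q_2;s)}$ are nonzero, since $\P^{(q_1,q_2;s)}$ is finite and, for each $\mu$ in it, a compatible surjection onto $[K]$ can exist only for $K$ at most the number of cycles of $\mu\gamma_{p_1,p_2}^{-1}$; thus the right-hand side is a polynomial in $x$, and a polynomial identity holding at all positive integers holds identically. I expect the only point with genuine content to be the reformulation ``compatibility relation $\Longleftrightarrow$ $\pi$ constant on the cycles of $\mu\gamma_{p_1,p_2}^{-1}$''; everything after that is the standard, combinatorially realized change of basis between $\{x^L\}_{L\ge 0}$ and $\{\binom xK\}_{K\ge 0}$. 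Equivalently, one could bypass the second count and instead cite the polynomial identity $x^L=\sum_{K\ge 1}\mathrm{Surj}(L,K)\binom xK$, where $\mathrm{Surj}(L,K)$ is the number of surjections from an $L$-element set onto a $K$-element set, together with the relation $f_K^{(q_1,q_2;s)}=\sum_L a_L^{(q_1,q_2;s)}\mathrm{Surj}(L,K)$ furnished by the first step.
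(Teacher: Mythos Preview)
Your argument is correct. Note, however, that the paper does not give its own proof of this proposition: it is stated as a result of Goulden--Slofstra and explicitly left unproved (``we will not be providing proofs for the results stated''). Your proof is precisely the standard one behind such change-of-basis identities (and is the argument in the cited Goulden--Slofstra paper): interpret the compatibility condition as constancy of $\pi$ on the cycles of $\mu\gamma_{p_1,p_2}^{-1}$, then double-count the pairs $(\mu,\pi)$ with $\pi$ an arbitrary function into $[N]$, once by $\mu$ and once by the image of $\pi$. There is nothing to add or correct.
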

Now, paired surjections can be represented graphically with a combinatorial
object called the \emph{labelled array}. This is an $2\times K$ array
of cells arranged in a grid. Each element $x^{\underline{i}}$ of
$\mu$ is represented as a vertex, where the vertex labelled $x^{\underline{i}}$
is placed into cell $\left(i,j\right)$ if $\pi\left(x^{\underline{i}}\right)=j$.
The vertices are arranged horizontally within a cell, in increasing
order of the labels. Furthermore, for each pair $\left\{ x^{\underline{i}},y^{\underline{k}}\right\} $
in $\mu$, an edge is drawn between their corresponding vertices.

For example, let $\left(\mu,\pi\right)\in\F_{4}^{\left(3,1;4\right)}$,
with $\mu$ and $\pi$ given by 
\begin{eqnarray*}
\mu & = & \left\{ \left\{ 1^{^{\underline{1}}},4^{^{\underline{2}}}\right\} ,\left\{ 2^{^{\underline{1}}},3^{^{\underline{1}}}\right\} ,\left\{ 4^{^{\underline{1}}},3^{^{\underline{2}}}\right\} ,\left\{ 5^{^{\underline{1}}},7^{^{\underline{1}}}\right\} ,\left\{ 6^{^{\underline{1}}},1^{^{\underline{2}}}\right\} ,\left\{ 8^{^{\underline{1}}},5{}^{^{\underline{2}}}\right\} ,\left\{ 9^{^{\underline{1}}},10^{^{\underline{1}}}\right\} ,\left\{ 2^{^{\underline{2}}},6^{^{\underline{2}}}\right\} \right\} 
\end{eqnarray*}
\begin{align*}
\pi^{-1}\left(1\right) & =\left\{ 2^{^{\underline{1}}},4^{^{\underline{1}}},4^{^{\underline{2}}}\right\}  & \pi^{-1}\left(2\right) & =\left\{ 3^{^{\underline{1}}},5^{^{\underline{1}}},8^{^{\underline{1}}},3^{^{\underline{2}}},6^{^{\underline{2}}}\right\} \\
\pi^{-1}\left(3\right) & =\left\{ 1^{^{\underline{1}}},9^{^{\underline{1}}},10^{^{\underline{1}}},5^{^{\underline{2}}}\right\}  & \pi^{-1}\left(4\right) & =\left\{ 6^{^{\underline{1}}},7^{^{\underline{1}}},1^{^{\underline{2}}},2^{^{\underline{2}}}\right\} 
\end{align*}
Then, the labelled array representing $\left(\mu,\pi\right)$ is given
by \ref{fig:Labelled Array}.

\begin{figure}
\begin{centering}
\includegraphics{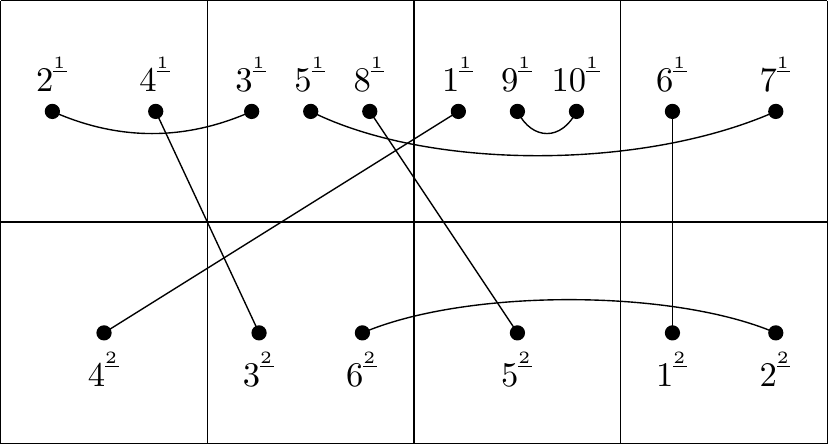}
\par\end{centering}
\caption{\label{fig:Labelled Array}A labelled array with 4 columns}
\end{figure}

Note that a $2\times K$ array with paired and labelled vertices as
described above uniquely represents a pairing $\mu\in\P^{\left(q_{1},q_{2};s\right)}$
and a function $\pi\colon\left[p_{1},\dots,p_{n}\right]\rightarrow\left[K\right]$.
Furthermore, we can strip the labels and define paired arrays as abstract
combinatorial objects, with conditions that allow for a bijection
between paired arrays and labelled arrays.
\begin{defn}
\label{def:Paired Array}Let $K,s\ge1$, $q_{1},q_{2}\ge0$, and $1\le R_{1},R_{2}\le K$.
We define $\PA{K}{R_{1},R_{2}}{q_{1},q_{2}}{s}$ to be the set of
\emph{paired arrays}, which are arrays of cells and vertices subject
to the following conditions.

\begin{itemize}
\item A paired array is an $2\times K$ array of cells, such that each cell
$\left(i,j\right)$ contains an ordered list of vertices, arranged
left to right, so that row $i$ contains $p_{i}\coloneqq2q_{i}+s$
vertices for $i=1,2$.
\item Each vertex $u$ is paired with exactly one other vertex $v$. Exactly
$2q_{i}$ vertices of row $i$ are paired with other vertices of row
$i$, and exactly $s$ vertices of row $i$ are paired with vertices
of the other row. Graphically, the pairings are denoted as edges between
vertices.
\item Each row $i$ has exactly $R_{i}$ marked cells, which are denoted
by marking the cell with a box in its upper or lower right corner.
\item A pair of vertices $\left\{ u,v\right\} $ is a \emph{mixed pair}
if $u$ and $v$ belong to different rows. The vertices $u$ and $v$
are called \emph{mixed vertices}.
\end{itemize}
\end{defn}
Generally, we use $\alpha\in\PA{K}{R_{1},R_{2}}{q_{1},q_{2}}{s}$
to denote a paired array. Before introducing the conditions used in
Goulden and Slofstra, we will first introduce a number of useful notations
and conventions.
\begin{conv}
\label{conv:Array Convention}For notational convenience, we introduce
the following:

\begin{itemize}
\item We use calligraphic letters to denote columns or sets of columns.
For generic columns or sets of columns, we use the letters $\X$,
$\Y$, and $\Z$.
\item For each calligraphic letter, we use the corresponding upper case
letter to denote the number of columns in the set. For example, $X=\left|\X\right|$.
\item For each calligraphic letter, we use the corresponding lower case
letter, subscripted by the row number, to denote the total number
of vertices in those columns for a given row. For example, $x_{i}$
is the total number of vertices in row $i$ of the columns of $\X$.
\item We generally use $i,j,k,\ell$ as index variables, with $i$ and $k$
for rows, and $j$ and $\ell$ for columns. Furthermore, we use cell
$\left(i,j\right)$ to denote the cell in row $i$, column $j$ of
the array.
\item We use $\K$ to denote the set of all columns, and $K$ to denote
the total number of columns.
\item We use $\R_{i}$ to denote the set of columns that are marked in row
$i$, and $R_{i}$ to denote the number of columns that are marked
in row $i$.
\item We use $\F_{i}$ to denote the set of columns that have at least one
vertex in row $i$, and $F_{i}$ to denote the number of columns that
are marked in row $i$.
\item We use $w_{i,j}$ to denote the number of vertices in cell $\left(i,j\right)$,
and $\mathbf{w}$ to denote a matrix of $w_{i,j}$ describing the
number of vertices in each cell of row $i$.
\end{itemize}
\end{conv}
With these conventions, we are ready to define the three conditions
that allow us to create a bijection between labelled arrays and paired
arrays.
\begin{defn}
\label{def:Paired Array Conditions}Let $\alpha\in\PA{K}{R_{1},R_{2}}{q_{1},q_{2}}{s}$
be a paired array.

\begin{itemize}
\item $\alpha$ is said to satisfy the \emph{non-empty condition} if each
column $j$ contains at least one object.
\item $\alpha$ is said to satisfy the \emph{balance condition} if for each
column $j$, the number of mixed vertices in cell $\left(1,j\right)$
and cell $\left(2,j\right)$ are equal.
\item For rows $i=1,2$, the \emph{forest condition function} $\psi_{i}\colon\F_{i}\backslash\R_{i}\mapsto\K$
is defined as follows: For each column $j\in\F_{i}\backslash\R_{i}$,
if the rightmost vertex $v$ is paired with a vertex $u$ in column
$\ell$, then $\psi_{i}\left(j\right)=\ell$. $\alpha$ is said to
satisfy the \emph{forest condition} if for each row $i$, the functional
digraph of $\psi_{i}$ on the vertex set $\F_{i}\cup\psi_{i}\left(\F_{i}\right)\cup\R_{i}$
is a forest with root vertices $\R_{i}$. That is, for each column
$j\in\F_{i}\backslash\R_{i}$, there exists some positive integer
$t$ such that $\psi_{i}^{t}\left(j\right)\in\R_{i}$. Note that we
always include $\R_{i}$ in the vertex set of the functional digraph
of $\psi_{i}$, regardless of whether they are in the domain or range
of $\psi_{i}$.
\end{itemize}
A paired array is \emph{proper} if it satisfies the non-empty, balance,
and forest conditions, and a paired array is a \emph{canonical array}
if it is proper and $R_{1}=R_{2}=1$. We denote the set of canonical
arrays as $\CA{K}{q_{1},q_{2}}{s}$, and we let $c_{K}^{\left(q_{1},q_{2};s\right)}=\left|\CA{K}{q_{1},q_{2}}{s}\right|$.
A paired array is called a \emph{vertical array} if for every pair
$\left\{ u,v\right\} $, $u$ and $v$ are in different rows, and
is \emph{proper} if it satisfies the non-empty, balance, and forest
conditions. We denote the set of vertical arrays as $\VA{K}{R_{1},R_{2}}{s}=\PA{K}{R_{1},R_{2}}{0,0}{s}$
and the set of proper vertical arrays as $\PVA{K}{R_{1},R_{2}}{s}$.
Finally, we let $v_{K;R_{1},R_{2}}^{\left(s\right)}=\left|\PVA{K}{R_{1},R_{2}}{s}\right|$.
\end{defn}
Note that we will generally not work directly with paired arrays that
do not satisfy the forest condition. However, as vertical arrays not
satisfying the forest condition are vital for extending paired arrays,
we have separated the forest condition from the definition of vertical
arrays itself. Next, we give a formula for relating the number of
canonical arrays to the number of vertical arrays.

\begin{figure}
\begin{centering}
\includegraphics{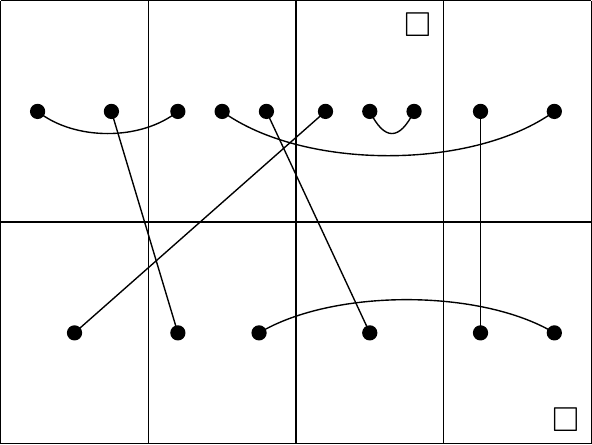}
\par\end{centering}
\caption{\label{fig:Canonical Array}A canonical paired array with 4 columns}
\end{figure}

\begin{thm}
(Goulden-Slofstra \cite{Goulden-Slofstra:2010}) \label{thm:Label to Canonical Array}For
$K,s\ge1$ and $q_{1},q_{2}\ge0$, we have $f_{K}^{\left(q_{1},q_{2};s\right)}=c_{K}^{\left(q_{1},q_{2};s\right)}$.
\end{thm}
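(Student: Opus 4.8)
The plan is to prove the identity by exhibiting an explicit bijection $\Phi\colon\F_K^{\left(q_1,q_2;s\right)}\to\CA{K}{q_1,q_2}{s}$, essentially the label‑stripping already hinted at in the text; the conclusion $f_K^{\left(q_1,q_2;s\right)}=c_K^{\left(q_1,q_2;s\right)}$ is then immediate. Given $\left(\mu,\pi\right)\in\F_K^{\left(q_1,q_2;s\right)}$, I would let $\Phi\left(\mu,\pi\right)$ be the array in which cell $\left(i,j\right)$ carries the vertices $\{v^{\underline i}:\pi\left(v^{\underline i}\right)=j\}$ listed left to right in increasing order of $v$, with one edge per pair of $\mu$, and with the cell containing $1^{\underline i}$ marked in each row $i$; write $r_i$ for that marked column. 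By definition of $\P^{\left(q_1,q_2;s\right)}$ this is a paired array with the correct vertex and pair counts and with $R_1=R_2=1$, and the non‑empty condition holds precisely because $\pi$ is surjective. So the first task is to verify that $\Phi\left(\mu,\pi\right)$ satisfies the balance and forest conditions.

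For balance, fix a column $j$ and a row $i$ and consider the map $x^{\underline i}\mapsto\mu\bigl(\gamma_{p_1,p_2}^{-1}\left(x^{\underline i}\right)\bigr)$. Since $\gamma_{p_1,p_2}$ has the two rows as its cycles, $\gamma_{p_1,p_2}^{-1}$ stays in row $i$, and the defining relation $\pi\left(\mu\left(v\right)\right)=\pi\left(\gamma_{p_1,p_2}\left(v\right)\right)$ shows this map sends the row‑$i$ vertices of column $j$ injectively into column $j$, with image exactly the set of vertices of column $j$ whose partner lies in row $i$. Comparing cardinalities for $i=1$ then forces the number of mixed vertices of column $j$ in row $1$ to equal the number in row $2$. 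For the forest condition in row $i$, I would observe that $\psi_i\left(j\right)$ is the column of $\gamma_{p_1,p_2}\left(v\right)$, where $v$ is the largest‑labelled vertex of cell $\left(i,j\right)$; hence $\psi_i$ maps $\F_i$ into $\F_i$, has no fixed point off $r_i$, and cannot lie on a directed cycle, since along such a cycle the maximal cell labels would have to strictly increase (the wrap‑around case forces $r_i$, which is a sink). A finite functional digraph with a unique sink and no cycle is a tree rooted at that sink, namely $r_i$, so the forest condition holds and $\Phi\left(\mu,\pi\right)\in\CA{K}{q_1,q_2}{s}$.

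To invert, given a canonical array $\alpha$ I would read off $\mu$ from the edges and $\pi$ from the columns, and reconstruct the labels of each row $i$ by a \emph{greedy traversal}: set $v_1$ to be the leftmost vertex of the marked cell of row $i$, and inductively let $v_{t+1}$ be the leftmost as‑yet‑unlabelled vertex of the cell of row $i$ lying in the column of $\mu\left(v_t\right)$, assigning label $t^{\underline i}$ to $v_t$. The key claim is that for each row $i$ this traversal is well defined for $t=1,\dots,p_i$, uses every vertex of row $i$ exactly once, and ends with the column of $\mu\left(v_{p_i}\right)$ equal to $r_i$. Granting this, $\gamma_{p_1,p_2}$ is the product of the two resulting cyclic orders, the relation $\pi\left(\mu\left(v\right)\right)=\pi\left(\gamma_{p_1,p_2}\left(v\right)\right)$ holds on each row by construction, and $\pi$ is surjective by the non‑empty condition, so the recovered pair is a paired surjection. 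It is then routine that this procedure and $\Phi$ are mutually inverse: running the traversal on $\Phi\left(\mu,\pi\right)$ returns the labels $1^{\underline i},2^{\underline i},\dots$ in order because $t^{\underline i}$ is always the smallest unused label available in its cell, and applying $\Phi$ to the reconstruction reproduces $\alpha$ because the reconstructed labels increase left to right within each cell and $1^{\underline i}$ sits in the marked cell.

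The main obstacle is the key claim about the greedy traversal. The natural framework is the directed multigraph $D_i$ on the column set $\F_i$ with one edge from the column of $v$ to the column of $\mu\left(v\right)$ for each row‑$i$ vertex $v$: at each column the out‑degree is the number of row‑$i$ vertices there and the in‑degree is the number of vertices there with a row‑$i$ partner, so the balance condition is exactly what makes $D_i$ in/out‑degree balanced, and the forest condition makes $D_i$ connected, since it contains the $\psi_i$‑edges, which span $\F_i$ and form a tree rooted at $r_i$. One must then show that the particular greedy rule — always take the leftmost unused vertex — traces a full Eulerian circuit of $D_i$ closing at $r_i$; balance and connectedness alone do not guarantee this for an arbitrary greedy walk, and the extra ingredient is that the $\psi_i$‑tree singles out at every column an ``exit of last resort'' toward the root, so that an induction over the subtrees of the $\psi_i$‑tree shows each subtree is exhausted before that exit is used. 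This is the step where the forest condition and the left‑to‑right conventions genuinely do their work; the remainder is bookkeeping.
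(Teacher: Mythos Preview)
Your proposal is correct and follows exactly the bijection the paper indicates---marking the cells containing $1^{\underline i}$ and stripping labels---while supplying the balance, forest, and Eulerian ``last-exit'' details that the paper itself omits by citing Goulden--Slofstra. There is nothing to add: this is the intended proof, and your identification of the greedy traversal with an Eulerian circuit guided by the $\psi_i$-arborescence is precisely the mechanism behind the Goulden--Slofstra bijection.
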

To obtain a canonical array from a labelled array, we simply marked
the cells that contain 1 in both rows, then delete the labels. Applying
this to the labelled array in \ref{fig:Labelled Array} gives us the
canonical array in \ref{fig:Canonical Array}. With this result, the
problem of enumerating maps on surfaces reduces to that of enumerating
canonical arrays. To solve the latter problem, we will first decompose
canonical arrays by removing all non-mixed pairs using the following
theorem.
\begin{thm}
(Goulden-Slofstra \cite{Goulden-Slofstra:2010}) \label{thm:Canonical Vertical Formula}Let
$n,K,s\ge1$ and $q_{1},q_{2}\ge0$. We have
\[
c_{K}^{\left(q_{1},q_{2};s\right)}=\sum_{t_{1},t_{2}\ge0}\prod_{i=1}^{n}\frac{p_{1}!p_{2}!}{2^{t_{1}+t_{2}}t_{1}!t_{2}!\left(s_{1}+q_{1}-t_{1}\right)!\left(s_{2}+q_{2}-t_{2}\right)!}\cdot v_{K;q_{1}-t_{1}+1,q_{2}-t_{2}+1}^{\left(s\right)}
\]
\end{thm}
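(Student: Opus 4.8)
The plan is to prove Theorem~\ref{thm:Canonical Vertical Formula} by stripping the non-mixed pairs off a canonical array one at a time, recording how the set of marked cells in each row grows, and counting the ways each step can be reversed. To have room to induct it is convenient to work in the full family of proper paired arrays: for $q_1,q_2\ge 0$ and $R_1,R_2\ge 1$ let $P_K^{(q_1,q_2;s)}(R_1,R_2)$ denote the number of proper arrays in $\PA{K}{R_1,R_2}{q_1,q_2}{s}$, so that $c_K^{(q_1,q_2;s)}=P_K^{(q_1,q_2;s)}(1,1)$ and $v_{K;R_1,R_2}^{(s)}=P_K^{(0,0;s)}(R_1,R_2)$. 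Since the non-mixed pairs of row $1$ and of row $2$ occupy disjoint sets of vertices, one may peel away all of row $1$'s pairs first and then all of row $2$'s; this independence is the source of the two separate sums over $t_1$ and $t_2$ and of the factorization $p_1!\,p_2!/\bigl((q_1+s-t_1)!\,(q_2+s-t_2)!\bigr)$ of the coefficient.

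The heart is a single move that deletes one non-mixed pair from, say, row $1$. Take a proper array with $q_1\ge 1$, choose a non-mixed pair $\{u,v\}$ of row $1$ with $v$ following $u$ in the left-to-right reading order of row $1$, delete both vertices, and close up the gaps in their cells. The balance condition is unaffected since $u$ and $v$ are non-mixed, so only the forest condition can be disturbed, and it can only be disturbed at the cell $\ell$ that contained $v$: if $v$ was \emph{not} the rightmost vertex of cell $(1,\ell)$ then $\psi_1$ is unchanged on every column and the result is again proper with the same $R_1$; if $v$ \emph{was} the rightmost vertex, then $\ell$ is marked, becoming a new root, and one checks that the result is proper with $R_1+1$ marked cells in row $1$. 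The delicate points, which must be pinned down, are the degenerate cases: cell $(1,\ell)$ may become empty, or its new rightmost vertex may be a mixed vertex whose partner lies in an empty cell of row $2$, and one must verify that marking $\ell$ genuinely restores the property ``forest with roots $\R_1$'' of Definition~\ref{def:Paired Array Conditions}, and deal with any column the deletion would otherwise leave empty. Reversing the move inserts a non-mixed pair $\{u,v\}$ into row $1$: either $v$ is inserted as a non-rightmost vertex and $u$ anywhere, keeping $R_1$, or $v$ is inserted as the new rightmost vertex of one of the currently marked cells --- which is thereby unmarked --- and $u$ anywhere, dropping $R_1$ by one. Because the numbers of such insertions should depend only on the current number of row-$1$ vertices and on $R_1$, and not on the rest of the array, the recursion closes on the numbers $P_K^{(q_1,q_2;s)}(R_1,R_2)$ themselves and takes the form
\[
q_1\,P_K^{(q_1,q_2;s)}(R_1,R_2)=\rho\,P_K^{(q_1-1,q_2;s)}(R_1,R_2)+\sigma\,P_K^{(q_1-1,q_2;s)}(R_1+1,R_2),
\]
where $\rho$ and $\sigma$ are the numbers of the two kinds of insertions, each an explicit product of a few linear factors in the current vertex count and in $R_1$; the analogue holds for row $2$.

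It remains to iterate. Applying the recursion $q_1$ times in row $1$ and $q_2$ times in row $2$ reaches $P_K^{(0,0;s)}(q_1-t_1+1,q_2-t_2+1)=v_{K;q_1-t_1+1,q_2-t_2+1}^{(s)}$, where for each row $i$ exactly $q_i-t_i$ of the steps were of the ``$R_i+1$'' kind. The accumulated coefficient is the product of all the $\rho$'s and $\sigma$'s encountered, a binomial $\binom{q_i}{t_i}$ recording which of the $q_i$ steps in row $i$ were of which kind, and the $q_1!\,q_2!$ in the denominator produced by the leading $q_i$ on the left of the recursion; once the falling-factorial products are collected into $p_i!/(q_i+s-t_i)!$ and the overcount $2^{t_i}t_i!$ incurred by treating the $t_i$ internal non-mixed pairs of row $i$ as an ordered, oriented list rather than an unordered family of unordered pairs is divided out, this collapses --- by routine binomial manipulation --- to the coefficient in the statement. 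I expect the main obstacle to be precisely the case analysis in the single move: verifying that after the prescribed re-marking the deletion always yields a proper array, including in all the degenerate cell and column configurations, and that insertion is its exact inverse, so that the recursion is a genuine identity and not merely an inequality; by comparison, the final assembly of the factorials is bookkeeping.
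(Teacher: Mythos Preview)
The paper does not give its own proof of this theorem: it attributes the result to Goulden--Slofstra and notes in the remark immediately following that their proof uses the \emph{forest completion algorithm}, which the paper does not reproduce. So there is no in-paper argument to compare against, only a named technique.

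Your single move, however, is broken as stated. You assert that after deleting a non-mixed row-$1$ pair $\{u,v\}$ (with $v$ the later vertex in reading order) the forest condition ``can only be disturbed at the cell $\ell$ that contained $v$''. This ignores the cell $k$ containing $u$: if $u$ happens to be the rightmost vertex of $k$, then $\psi_1(k)=\ell$ before deletion, and after deletion the new rightmost vertex of $k$ is paired with a vertex in some possibly unrelated column $m$, so $\psi_1(k)$ jumps to $m$. Nothing forces $m$ to lie outside the subtree that was rooted at $k$. Concretely, take row~$1$ with cells $1,2,3$ carrying vertices $(a,u)$, $(b)$, $(v,c)$ respectively; pair $\{u,v\}$ and $\{a,b\}$ as non-mixed and let $c$ be mixed, with $\psi_1(3)$ landing in a marked cell. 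The original row-$1$ forest is the path $2\to 1\to 3\to\text{root}$, which is valid. Here $v$ is \emph{not} the rightmost of cell~$3$, so by your rule one simply deletes $\{u,v\}$ and expects $\psi_1$ unchanged; but after deletion $a$ is rightmost in cell~$1$ and $b$ is rightmost in cell~$2$, giving $\psi_1(1)=2$ and $\psi_1(2)=1$, a $2$-cycle. The output is not proper, so the map underlying your recursion $q_1\,P=\rho\,P'+\sigma\,P''$ is not well defined and the counting collapses.

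This is exactly the difficulty the forest completion algorithm is built to resolve: stripping non-mixed pairs reshuffles the row-$i$ forest in a way that is not local to one endpoint, and the number of new marked cells one must introduce is governed by the whole non-mixed pairing of row~$i$ at once, not by a pair-by-pair case split on whether $v$ was rightmost. You correctly anticipated that the single move would be the obstacle; the specific obstacle is that \emph{both} endpoints of the deleted pair, not just the later one, can be critical for $\psi_1$.
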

For example, by decomposing the canonical array in \ref{fig:Canonical Array},
we can obtain the vertical array in \ref{fig:Vertical Array}. Then,
by combining the theorems we have so far, we can write the generating
series in terms of the number of vertical arrays.
\begin{cor}
(Goulden-Slofstra \cite{Goulden-Slofstra:2010}) \label{cor:Canonical Vertical Formula}Let
$n,K,s\ge1$ and $q_{1},q_{2}\ge0$. We have
\[
A^{\left(q_{1},q_{2};s\right)}\left(x\right)=\sum_{\substack{K\ge1\\
t_{1},t_{2}\ge0
}
}\binom{x}{K}\cdot\frac{p_{1}!p_{2}!}{2^{t_{1}+t_{2}}t_{1}!t_{2}!\left(s_{1}+q_{1}-t_{1}\right)!\left(s_{2}+q_{2}-t_{2}\right)!}\cdot v_{K;q_{1}-t_{1}+1,q_{2}-t_{2}+1}^{\left(s\right)}
\]
\end{cor}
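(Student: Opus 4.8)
The plan is to simply chain together the three results already in hand. First I would start from \ref{prop:Paired surjection formula}, which writes
\[
A^{\left(q_{1},q_{2};s\right)}\left(x\right)=\sum_{K\ge1}f_{K}^{\left(q_{1},q_{2};s\right)}\binom{x}{K}.
\]
Next I would apply \ref{thm:Label to Canonical Array} to replace each $f_{K}^{\left(q_{1},q_{2};s\right)}$ by $c_{K}^{\left(q_{1},q_{2};s\right)}$, so that the series is expressed entirely in terms of canonical arrays. Finally I would substitute the expansion of $c_{K}^{\left(q_{1},q_{2};s\right)}$ given by \ref{thm:Canonical Vertical Formula} into this sum, turning the coefficient of $\binom{x}{K}$ into a sum over $t_{1},t_{2}\ge0$ of multiples of $v_{K;q_{1}-t_{1}+1,q_{2}-t_{2}+1}^{\left(s\right)}$.

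The only actual work is the bookkeeping of the resulting nested sum. After substitution the coefficient of $\binom{x}{K}$ reads
\[
\sum_{t_{1},t_{2}\ge0}\frac{p_{1}!p_{2}!}{2^{t_{1}+t_{2}}t_{1}!t_{2}!\left(s_{1}+q_{1}-t_{1}\right)!\left(s_{2}+q_{2}-t_{2}\right)!}\cdot v_{K;q_{1}-t_{1}+1,q_{2}-t_{2}+1}^{\left(s\right)},
\]
where in the two-vertex setting the product $\prod_{i=1}^{n}$ appearing in \ref{thm:Canonical Vertical Formula} has a single relevant factor independent of the index, and so contributes nothing beyond the displayed expression. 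I would then merge the outer summation over $K\ge1$ with the inner summation over $t_{1},t_{2}\ge0$ into the single indexed sum $\sum_{K\ge1,\ t_{1},t_{2}\ge0}$ that appears in the statement. There is no convergence issue to address: for each fixed $K$ the $t_{1},t_{2}$ sum is finite (the summand vanishes unless $t_{i}\le q_{i}$) and $\binom{x}{K}$ is a polynomial of degree $K$, so the identity holds termwise as an equality of polynomials in $x$, equivalently degree by degree in the ring of formal power series.

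I do not expect a genuine obstacle here — the corollary is essentially immediate from the three cited results. The one mild point worth checking is the reconciliation of notation: confirming that the symbols $s_{1},s_{2}$ occurring in \ref{thm:Canonical Vertical Formula} are each equal to the single mixed-pair parameter $s$ of the paired-surjection data (so that $v_{K;\,q_{1}-t_{1}+1,\,q_{2}-t_{2}+1}^{\left(s\right)}$ is the right quantity), and that the residual parameter $n$, together with the product over $i$, is vacuous once specialized to two vertices. Once that is pinned down, assembling \ref{prop:Paired surjection formula}, \ref{thm:Label to Canonical Array}, and \ref{thm:Canonical Vertical Formula} yields the stated formula for $A^{\left(q_{1},q_{2};s\right)}\left(x\right)$.
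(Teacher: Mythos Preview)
Your proposal is correct and matches the paper's approach exactly: the corollary is obtained by combining \ref{prop:Paired surjection formula}, \ref{thm:Label to Canonical Array}, and \ref{thm:Canonical Vertical Formula}, as the paper itself indicates just before stating it. Your remarks on the vacuous product over $i$ and the identification $s_{1}=s_{2}=s$ are the right sanity checks, and there is nothing further to add.
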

\begin{rem}
While \ref{thm:Canonical Vertical Formula} is proved in Goulden and
Slofstra using the \emph{forest completion algorithm}, we can in fact
use the techniques developed in this paper to bypass this requirement
if we so desire. This alternate approach can be found in \cite{ChanThesis:2016}.
\end{rem}
\begin{figure}
\begin{centering}
\includegraphics{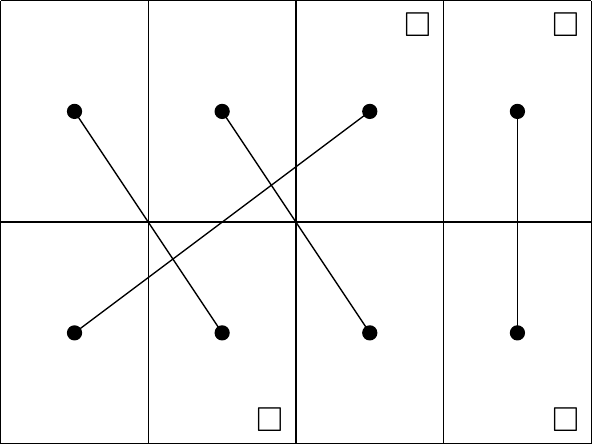}
\par\end{centering}
\caption{\label{fig:Vertical Array}Proper vertical array from the decomposition
of \ref{fig:Canonical Array}}
\end{figure}

\section{\label{sec:Arrowed Array Definitions}Definitions and Terminology
of Arrowed Arrays}

In this section, we will extend paired arrays by the addition of arrows,
which represent hypothetical vertices used in the forest condition.
This will allow us to decouple the forest condition with the vertex
pairings, which allows for the deletion of vertices and pairings from
paired arrays.
\begin{defn}
\label{def:Arrowed Array}Let $K\ge1$, $s\ge0$, and $1\le R_{1},R_{2}\le K$.
An \emph{arrowed array} is a pair $\left(\alpha,\phi\right)$, where
$\alpha\in\VA{K}{R_{1},R_{2}}{s}$ is a two-row vertical array, and
$\phi\colon\K\backslash\R_{1}\rightarrow\K$ is a partial function
from $\H\subseteq\K\backslash\R_{1}$ to $\K$, with $\R_{1}$ being
the set of marked columns in row 1 of $\alpha$. Graphically, $\phi$
is denoted by arrows drawn above row 1, where an arrow from $j$ to
$j^{\prime}$ is drawn if $j\in\H$ and $\phi\left(j\right)=j^{\prime}$.
For convenience, the two ends of the arrow belonging to columns $j$
and $j^{\prime}$ are called the \emph{arrow-tail} and \emph{arrow-head}
respectively, and column $j$ is said to \emph{point to} column $j^{\prime}$.
Furthermore, both the arrow-tail and arrow-head belong to row 1 of
their respective columns.

With the generalization of paired arrays to arrowed arrays, there
are corresponding generalizations of the terms and conventions used
to describe paired arrays. These generalizations will be compatible
with the conventions for paired arrays if the partial function $\phi$
is empty.

\begin{itemize}
\item An \emph{object} of $\left(\alpha,\phi\right)$ refers to either a
vertex, a box, or an arrow-tail. If a cell both contains vertices
and a box, or vertices and an arrow-tail, either the box or the arrow-tail
is to be taken as the rightmost object of the cell. 
\item A vertex $v$ of an arrowed array is \emph{critical }if it is the
rightmost vertex of a cell, and the cell it belongs to is neither
marked nor contains an arrow-tail. A pair $\left\{ u,v\right\} $
that contains a critical vertex is a \emph{critical pair}.
\item $\left(\alpha,\phi\right)$ is said to satisfy the \emph{non-empty
condition} if for each column $j$, there exists at least one cell
that contains an object.
\item $\left(\alpha,\phi\right)$ is said to satisfy the \emph{balance condition}
if for each column $j$, the number of vertices in cell $\left(1,j\right)$
is equal to the number of vertices in cell $\left(2,j\right)$.
\item Let $\F_{i}$ be the set of columns in row $i$ that contain at least
one vertex. The \emph{forest condition function} $\psi_{1}\colon\left(\H\cup\F_{1}\right)\backslash\R_{1}\mapsto\K$
for row 1 is defined as follows: For each column $j\in\H$, let $\psi_{1}\left(j\right)=\phi\left(j\right)$;
for $j\in\F_{1}\backslash\left(\H\cup\R_{1}\right)$, if the rightmost
vertex $v$ is paired with a vertex $u$ in column $j^{\prime}$,
let $\psi_{1}\left(j\right)=j^{\prime}$. The forest condition function
$\psi_{2}$ for row 2 is defined to be the same as the one for paired
arrays in \ref{def:Paired Array Conditions}. $\left(\alpha,\phi\right)$
is said to satisfy the \emph{forest condition} if the functional digraph
of $\psi_{1}$ on the vertex set $\H\cup\F_{1}\cup\psi_{1}\left(\H\cup\F_{1}\right)\cup\R_{1}$
is a forest with root vertices $\R_{1}$, and the functional digraph
of $\psi_{2}$ on the vertex set $\F_{2}\cup\psi_{2}\left(\F_{2}\right)\cup\R_{2}$
is a forest with root vertices $\R_{2}$. That is, for each column
$j\in\left(\H\cup\F_{1}\right)\backslash\R_{1}$, there exists some
positive integer $t$ such that $\psi_{1}^{t}\left(j\right)\in\R_{1}$,
and for each column $j\in\F_{2}\backslash\R_{2}$, there exists some
positive integer $t$ such that $\psi_{2}^{t}\left(j\right)\in\R_{2}$.
\item Additionally, $\left(\alpha,\phi\right)$ is said to satisfy the \emph{full
condition} if every cell contains at least one object.
\end{itemize}
The set of arrowed arrays that satisfy the forest condition is denoted
$\AR{K}{R_{1}}{R_{2}}{s}$.
\end{defn}
Notice in particular that a cell cannot contain both an arrow-tail
and be marked at the same time. Furthermore, a vertex is critical
if and only if it contributes to the forest condition function. Unless
otherwise stated, we will continue to use the conventions for paired
arrays defined in \ref{conv:Array Convention} for arrowed arrays.
As with paired arrays, we will always include the columns $\R_{i}$
in the vertex set for the functional digraph of $\psi_{i}$, regardless
of whether they are in the range of $\psi_{i}$. Note that permuting
the columns of an arrowed array does not change whether the array
satisfies the balance or forest conditions, as all this action does
is to relabel the vertices of the functional digraph. An example of
an arrowed array that satisfies the forest condition can be found
in \ref{fig:Arrowed Array}.

\begin{figure}
\begin{centering}
\includegraphics{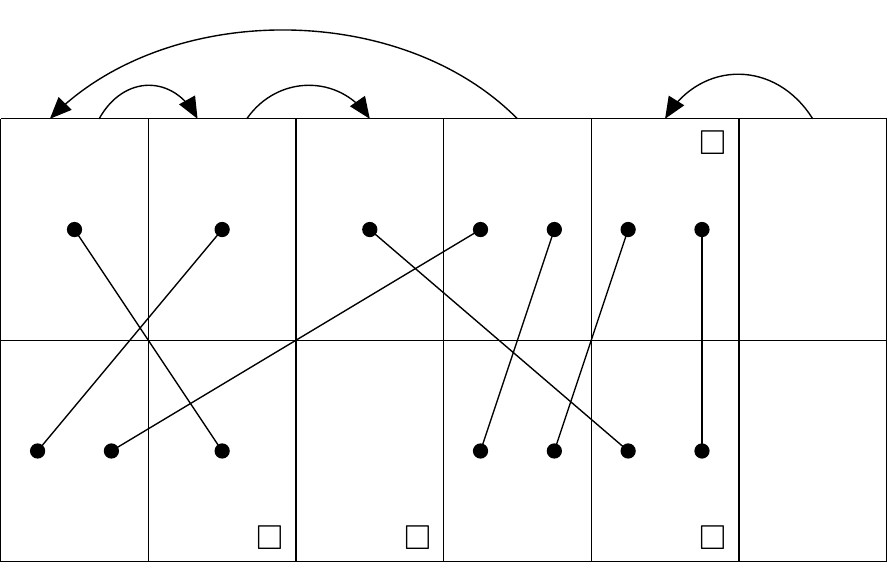}
\par\end{centering}
\caption{\label{fig:Arrowed Array}A arrowed array in $\AR{6}{1}{3}{7}$}
\end{figure}

While the parameters used for defining the set of arrowed arrays is
natural with respect to paired arrays, it does not easily lend itself
to a formula. To make it manageable for summation, we need to partition
the set of arrowed arrays by adding further constraints.
\begin{defn}
\label{def:Substructure}Let $K\ge1$, $s\ge0$, and $1\le R_{1},R_{2}\le K$.
A \emph{substructure} $\Theta$ of $\AR{K}{R_{1}}{R_{2}}{s}$ is a
set of constraints that defines a subset of $\AR{K}{R_{1}}{R_{2}}{s}$.
For convenience, an arrowed array $\left(\alpha,\phi\right)$ is said
to satisfy $\Theta$ if $\left(\alpha,\phi\right)$ satisfies the
constraints given by $\Theta$. In particular, let $\mathbf{w}$ be
a non-negative matrix of size $2\times K$, $\R_{1},\R_{2}$ be $R_{1}$
and $R_{2}$ subsets of $\K$, and $\phi$ be a partial function from
$\H\subseteq\K\backslash\R_{1}$ to $\K$. The substructure $\Gamma=\left(\mathbf{w},\R_{1},\R_{2},\phi\right)$
is defined to be the subset of $\AR{K}{R_{1}}{R_{2}}{s}$, such that
for each pair $\left(\alpha^{\prime},\phi^{\prime}\right)\in\AR{K}{R_{1}}{R_{2}}{s}$,
the marked cells in row 1 and 2 of $\alpha^{\prime}$ are $\R_{1}$
and $\R_{2}$ respectively, $\alpha^{\prime}$ contains $w_{i,j}$
vertices in cell $\left(i,j\right)$, and $\phi^{\prime}=\phi$.
\end{defn}
Note that knowing $\mathbf{w}$, $\R_{1}$, $\R_{2}$ and $\phi$
is enough to determine whether an arrowed array satisfies the balance,
non-empty, or full conditions. It is also sufficient to determine
whether a vertex is critical, regardless of the actual pairing of
the vertices. Therefore, we can use these terms, and terms such as
arrow-head, arrow-tail, and points to with respect to $\Gamma$.

Next, we will lay the groundwork for the enumeration of arrowed arrays
satisfying a given substructure $\Gamma$. This involves introducing
several lemmas that limit the number of possibilities we have to consider,
as well as lemmas that allow us to remove pairings from arrowed arrays.
This allows us to categorize $\Gamma$ based on a number of parameters
that serve as invariants for the number of arrowed arrays that satisfy
$\Gamma$.
\begin{lem}
\label{lemma:Arrow Simplification Gamma-1}Let $\Gamma=\left(\mathbf{w},\R_{1},\R_{2},\phi\right)$
be a substructure of $\AR{K}{R_{1}}{R_{2}}{s}$, and suppose that
$\phi$ contains a column $\X$ that points to a column $\Y$, with
cell $\left(1,\Y\right)$ marked. Let $\Gamma^{\prime}=\left(\mathbf{w},\R_{1}\cup\left\{ \X\right\} ,\R_{2},\phi^{\prime}\right)$
be a substructure of $\AR{K}{R_{1}+1}{R_{2}}{s}$, such that
\begin{eqnarray*}
\phi^{\prime}\left(j\right) & = & \begin{cases}
\mbox{undefined} & j=\X\\
\phi\left(j\right) & j\in\H\backslash\X\mbox{ },
\end{cases}
\end{eqnarray*}
that is, instead of pointing to $\Y$, we mark cell $\left(1,\X\right)$
of $\Gamma^{\prime}$. Then, the number of arrowed arrays satisfying
$\Gamma$ and the number of arrowed arrays satisfying $\Gamma^{\prime}$
are equal. Furthermore, $\Gamma$ satisfies the balance, non-empty,
and full conditions if and only if $\Gamma^{\prime}$ satisfies them,
respectively.
\end{lem}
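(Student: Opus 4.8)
The plan is to exhibit an explicit bijection between the arrowed arrays satisfying $\Gamma$ and those satisfying $\Gamma^{\prime}$, and then to verify that the bijection (together with the fact that $\mathbf{w}$, $\R_{2}$, and most of $\phi$ are unchanged) preserves the balance, non-empty, and full conditions. The bijection itself is essentially forced: given $\left(\alpha,\phi\right)$ satisfying $\Gamma$, produce $\left(\alpha,\phi^{\prime}\right)$ by deleting the arrow-tail in cell $\left(1,\X\right)$ and instead marking cell $\left(1,\X\right)$; conversely, given $\left(\alpha^{\prime},\phi^{\prime}\right)$ satisfying $\Gamma^{\prime}$, unmark cell $\left(1,\X\right)$ and draw an arrow from $\X$ to $\Y$. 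The underlying vertical array $\alpha$ and its vertex pairings are untouched, so the only thing at issue is whether the forest condition is preserved in row 1 (row 2 is clearly unaffected, since $\psi_{2}$ depends only on $\R_{2}$ and the pairings, which do not change).

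First I would record the structural observations that make this work. Since cell $\left(1,\Y\right)$ is marked in $\Gamma$, we have $\Y\in\R_{1}$, hence $\psi_{1}(\X)=\phi(\X)=\Y$ is a root of the forest $\psi_{1}$; in other words $\X$ is a leaf-to-root edge, i.e. $\X$ sits at depth $1$ in its tree and its parent is the root $\Y$. Note also that adding $\X$ to $\R_{1}$ is legitimate: a column cannot be both marked and contain an arrow-tail, and in $\Gamma^{\prime}$ the column $\X$ no longer carries an arrow-tail (we removed it), and $\X$ was not already in $\R_{1}$ because it was in the domain $\H\subseteq\K\backslash\R_{1}$ of $\phi$. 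Likewise $\X\notin\F_{1}$ is not needed — whether or not $\X$ contains vertices in row 1, in $\Gamma$ the column $\X$ contributes to $\psi_{1}$ only through its arrow-tail (since the arrow-tail, if present, is the rightmost object of the cell, no vertex of cell $\left(1,\X\right)$ is critical), and in $\Gamma^{\prime}$ the column $\X$ is marked, hence a root, and again contributes nothing to $\psi_{1}$ as a non-root vertex. This is exactly where the definition's insistence that arrow-tails take precedence over vertices as the rightmost object pays off: it guarantees that the domain of $\psi_{1}$ changes in a controlled way.

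Next I would verify the forest condition transfers in both directions. Write $\psi_{1}$ for the forest-condition function of $\left(\alpha,\phi\right)$ and $\psi_{1}^{\prime}$ for that of $\left(\alpha,\phi^{\prime}\right)$. On the domain $\left(\H\cup\F_{1}\right)\backslash\left(\R_{1}\cup\{\X\}\right)$ the two functions agree, since neither the pairings nor the values $\phi(j)=\phi^{\prime}(j)$ for $j\neq\X$ have changed, and no column other than $\X$ has had its ``rightmost object'' status altered. So the functional digraph of $\psi_{1}^{\prime}$ is obtained from that of $\psi_{1}$ by deleting the single edge $\X\to\Y$ and promoting $\X$ to a root; since $\X$ was a child of the root $\Y$, this operation turns a forest with roots $\R_{1}$ into a forest with roots $\R_{1}\cup\{\X\}$ (the subtree hanging below $\X$, if any, simply becomes its own tree rooted at $\X$), and it is clearly reversible. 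Hence $\left(\alpha,\phi\right)\in\AR{K}{R_{1}}{R_{2}}{s}$ satisfies $\Gamma$ if and only if $\left(\alpha,\phi^{\prime}\right)\in\AR{K}{R_{1}+1}{R_{2}}{s}$ satisfies $\Gamma^{\prime}$, which gives the claimed equality of cardinalities. Finally, since $\mathbf{w}$ is identical for $\Gamma$ and $\Gamma^{\prime}$, and since by the remark following \ref{def:Substructure} the balance, non-empty, and full conditions depend only on $\mathbf{w}$, $\R_{1}$, $\R_{2}$, $\phi$ through the vertex counts — and marking versus arrow-tailing a cell does not change whether that cell ``contains an object'' — these three conditions hold for $\Gamma$ exactly when they hold for $\Gamma^{\prime}$. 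The only mild subtlety, and the step I would be most careful about, is the bookkeeping in the previous sentence for the \emph{full} condition: one must check that cell $\left(1,\X\right)$ contains an object in $\Gamma$ (an arrow-tail) if and only if it contains one in $\Gamma^{\prime}$ (a box), which is immediate, and that no other cell is affected.
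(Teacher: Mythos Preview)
Your proposal is correct and follows essentially the same argument as the paper: exhibit the obvious bijection that swaps the arrow-tail in cell $(1,\X)$ for a mark, observe that $\psi_{2}$ is unchanged because pairings are untouched, and note that in the functional digraph of $\psi_{1}$ the only effect is that $\X$, which previously pointed to the root $\Y$, becomes a root itself, so the forest condition is preserved in both directions; the object-count in each cell is unchanged, hence the balance, non-empty, and full conditions transfer. Your write-up is considerably more careful about the bookkeeping (e.g.\ the precedence of arrow-tails over vertices, why $\X\notin\R_{1}$ to begin with) than the paper's terse version, but the underlying idea is identical.
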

\begin{proof}
Let $\alpha\in\VA{K}{R_{1},R_{2}}{s}$ be a two-row vertical array,
and $\alpha^{\prime}$ be a vertical array otherwise identical to
$\alpha$, but with cell $\left(1,\X\right)$ marked. As we have not
changed the vertex pairings, $\psi_{2}$ remains unchanged between
$\left(\alpha,\phi\right)$ and $\left(\alpha^{\prime},\phi^{\prime}\right)$.
The only change to the functional digraph of $\psi_{1}$ is that $\X$
is also a root vertex, instead of simply pointing to one. Therefore,
$\left(\alpha,\phi\right)$ satisfies the forest condition if and
only if $\left(\alpha^{\prime},\phi^{\prime}\right)$ does, so the
number of arrowed arrays satisfying $\Gamma$ and $\Gamma^{\prime}$
are equal. As we have not changed the number of objects in each cell,
we see that $\Gamma$ satisfies the balance, non-empty, and full conditions
if and only if $\Gamma^{\prime}$ satisfies them, respectively.
\end{proof}
\begin{figure}
\begin{centering}
\includegraphics{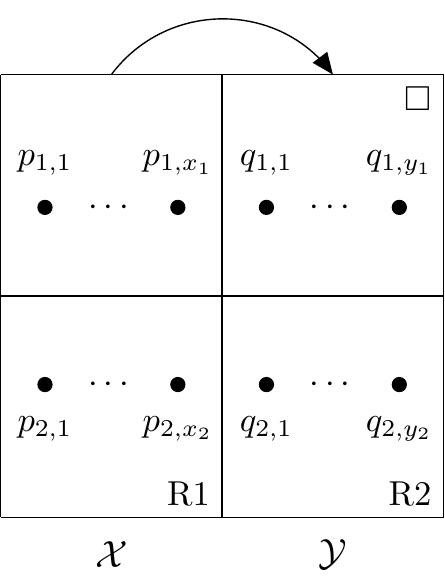}$\qquad\qquad\qquad$\includegraphics{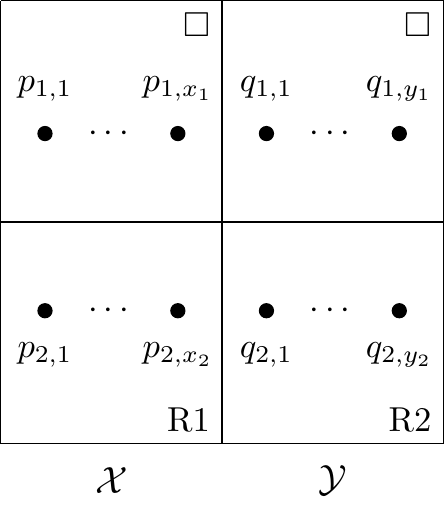}
\par\end{centering}
By applying the arrow simplification procedure to the left figure,
we arrive at the right figure. R1 and R2 can be arbitrary in whether
they are marked, but they must be the same between the two figures.

\caption{\label{fig:Arrow Simplification-1}Arrow Simplification 1}
\end{figure}

\begin{lem}
\label{lemma:Arrow Simplification Gamma-2}Let $\Gamma=\left(\mathbf{w},\R_{1},\R_{2},\phi\right)$
be a substructure of $\AR{K}{R_{1}}{R_{2}}{s}$, and suppose that
$\phi$ contains a column $\X$ that points to a column $\Y$, and
the column $\Y$ points to another column $\Z$. Let $\Gamma^{\prime}=\left(\mathbf{w},\R_{1},\R_{2},\phi^{\prime}\right)$
be a substructure of $\AR{K}{R_{1}}{R_{2}}{s}$ such that
\begin{eqnarray*}
\phi^{\prime}\left(j\right) & = & \begin{cases}
\Z & j=\X\\
\phi\left(j\right) & j\in\H\backslash\X\mbox{ },
\end{cases}
\end{eqnarray*}
that is, instead of pointing to $\Y$, $\X$ now points to $\Z$ in
$\phi^{\prime}$. Then, the number of arrowed arrays satisfying $\Gamma$
and the number of arrowed arrays satisfying $\Gamma^{\prime}$ are
equal. Furthermore, $\Gamma$ satisfies the balance, non-empty, and
full conditions if and only if $\Gamma^{\prime}$ satisfies them,
respectively.
\end{lem}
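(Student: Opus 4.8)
The plan is to mirror the proof of the previous lemma (Lemma~\ref{lemma:Arrow Simplification Gamma-1}): we exhibit a natural bijection between arrowed arrays satisfying $\Gamma$ and those satisfying $\Gamma^{\prime}$, and then check that the functional digraph of $\psi_{1}$ changes in a controlled way that preserves the forest property. Since $\mathbf{w}$, $\R_{1}$, $\R_{2}$, and the sizes are identical for $\Gamma$ and $\Gamma^{\prime}$, the underlying vertical array $\alpha$ is untouched; the only change is in the partial function, where the arrow $\X\to\Y$ is replaced by $\X\to\Z$. So the candidate bijection is simply $\left(\alpha,\phi\right)\mapsto\left(\alpha,\phi^{\prime}\right)$, and the content of the lemma is that this map sends arrowed arrays satisfying the forest condition to arrowed arrays satisfying the forest condition, and conversely.

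First I would set up $\psi_{1}$ and $\psi_{1}^{\prime}$, the forest condition functions of $\left(\alpha,\phi\right)$ and $\left(\alpha,\phi^{\prime}\right)$. Since the vertex pairings are unchanged, $\psi_{2}$ is literally the same, so the row-2 part of the forest condition is unaffected in both directions; I will dispose of that in one sentence. For row~1, $\psi_{1}$ and $\psi_{1}^{\prime}$ agree on every column except $\X$, where $\psi_{1}\left(\X\right)=\Y$ and $\psi_{1}^{\prime}\left(\X\right)=\Z$. Note also that because $\Y$ itself points to $\Z$ under $\phi$ (and $\phi^{\prime}$ does not alter this), we have $\psi_{1}\left(\Y\right)=\psi_{1}^{\prime}\left(\Y\right)=\Z$; in particular $\psi_{1}\left(\X\right)=\Y$ is not a root, so $\X\notin\R_{1}$ and the vertex set $\H\cup\F_{1}\cup\psi_{1}\left(\H\cup\F_{1}\right)\cup\R_{1}$ is the same for both (it contains $\X,\Y,\Z$ in either case). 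The key step is then the following digraph observation: in the functional digraph of $\psi_{1}$ there is an edge $\X\to\Y\to\Z$, and replacing it by $\X\to\Z$ is exactly the operation of ``short-cutting'' past the vertex $\Y$ along a directed path. Such an operation preserves the property of being a forest rooted at $\R_{1}$: iterating $\psi_{1}$ from any column eventually reaches $\R_{1}$ if and only if iterating $\psi_{1}^{\prime}$ does, because the only affected trajectories are those that passed through the edge $\X\to\Y$, and for those, $\psi_{1}^{t}\left(j\right)\in\R_{1}$ for some $t$ with the old arrow iff $\psi_{1}'^{t'}\left(j\right)\in\R_{1}$ for some $t'$ with the new arrow (the new trajectory is the old one with the single column $\Y$ deleted from it, and $\Y$ is still reached from elsewhere via its own out-edge to $\Z$, so no column is orphaned). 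I would phrase this via the standard fact that a functional digraph is a forest rooted at a prescribed set iff it has no directed cycle avoiding that set, and observe that a directed cycle through the edge $\X\to\Y$ (avoiding $\R_{1}$) exists iff one through $\X\to\Z$ (avoiding $\R_{1}$) exists, since $\Y\to\Z$ is present regardless.

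The main obstacle is making the cycle/reachability argument airtight in the edge cases: for instance if $\Z=\X$ (the arrow $\X\to\Y\to\X$ would become a loop $\X\to\X$), or if $\Y$ or $\Z$ coincides with a root, or if other columns point into $\Y$. I would handle these by noting that $\Y$ retains its out-edge $\Y\to\Z$ in $\phi^{\prime}$, so $\Y$'s own reachability of $\R_{1}$ is unchanged, and any column other than $\X$ that pointed to $\Y$ still points to $\Y$; thus the only path-segment altered is the single edge out of $\X$, and the rest of the argument is the short-cut lemma above. Finally, since $\mathbf{w}$ is unchanged, $\Gamma$ and $\Gamma^{\prime}$ have identical cell-occupancy data, so they satisfy the balance, non-empty, and full conditions simultaneously, which finishes the proof.
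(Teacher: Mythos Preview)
Your proposal is correct and follows essentially the same approach as the paper: fix $\alpha$, note $\psi_{2}$ is untouched, observe that $\psi_{1}$ changes only at $\X$, argue that rerouting the single out-edge $\X\to\Y$ to $\X\to\Z$ (with $\Y\to\Z$ still present) preserves the forest property, and finish by noting that the object content of every cell is unchanged. The paper phrases the graph step as ``detaching the subtree rooted at $\X$ from $\Y$ and reattaching it elsewhere on the same tree,'' which is just a structural rewording of your short-cut/reachability argument; your version is more detailed about the edge cases but not materially different.
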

\begin{proof}
Let $\alpha\in\VA{K}{R_{1},R_{2}}{s}$ be a two-row vertical array.
Again, as we have not changed the vertex pairings, $\psi_{2}$ remains
unchanged between $\left(\alpha,\phi\right)$ and $\left(\alpha^{\prime},\phi^{\prime}\right)$.
The only change to the functional digraph of $\psi_{1}$ is that $\X$
now points to $\Z$, instead of pointing to $\Y$. This is the same
as detaching the subtree rooted at $\X$ from $\Y$, and attaching
it elsewhere on the same tree. Therefore, $\left(\alpha,\phi\right)$
satisfies the forest condition if and only if $\left(\alpha^{\prime},\phi^{\prime}\right)$
does, so the number of arrowed arrays satisfying $\Gamma$ and $\Gamma^{\prime}$
are equal. Again, as we have not changed the number of objects in
each cell, we see that $\Gamma$ satisfies the balance, non-empty,
and full conditions if and only if $\Gamma^{\prime}$ satisfies them,
respectively.
\end{proof}
\begin{figure}
\begin{centering}
\includegraphics{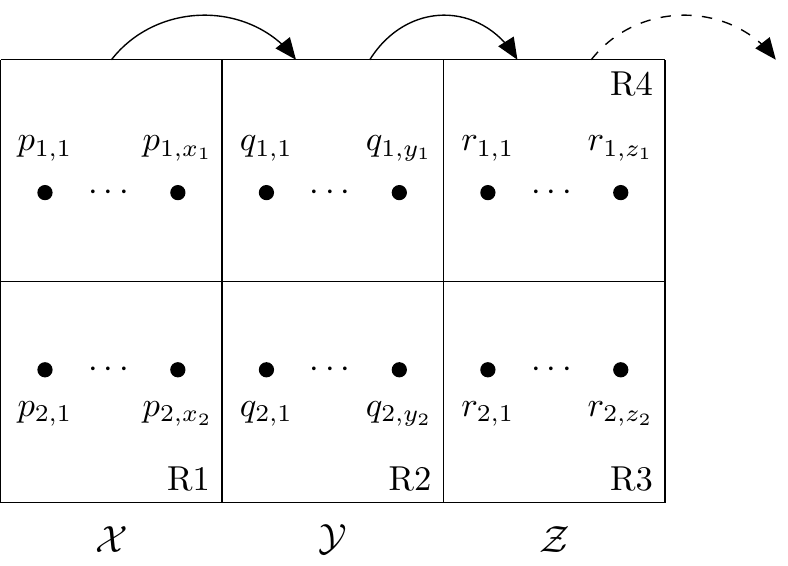}
\par\end{centering}
\begin{centering}
\includegraphics{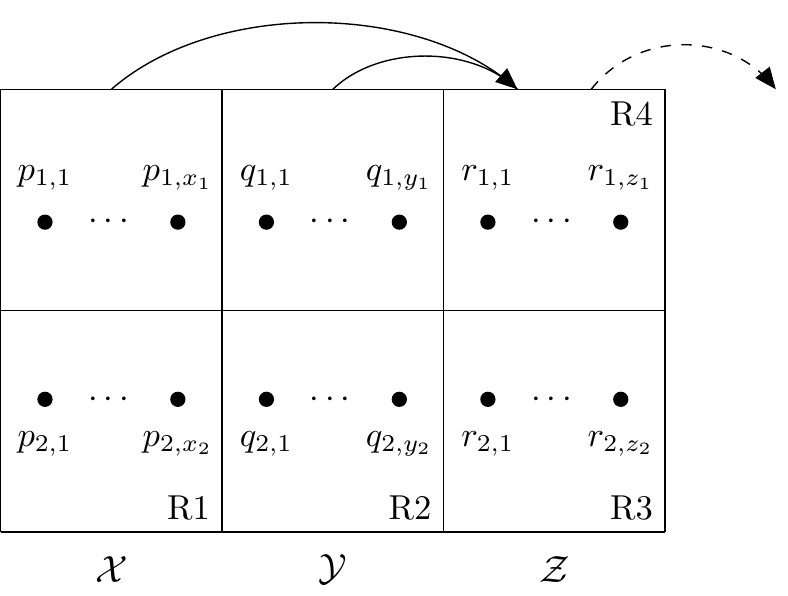}
\par\end{centering}
By applying the arrow simplification procedure to the top figure,
we arrive at the bottom figure. R1, R2, R3, and R4 can be arbitrary
in whether they are marked, but they must be the same between the
two figures. The same holds for the optional arrow with $\Z$ as its
tail.

\caption{\label{fig:Arrow Simplification-2}Arrow Simplification 2}
\end{figure}

Collectively, \ref{lemma:Arrow Simplification Gamma-1} and \ref{lemma:Arrow Simplification Gamma-2}
are the \emph{arrow simplification lemmas}, and pictures describing
the applications of these lemmas can be found in \ref{fig:Arrow Simplification-1}
and \ref{fig:Arrow Simplification-2}. Furthermore, applying these
lemmas to the array in \ref{fig:Arrowed Array} gives us \ref{fig:Irreducible Arrowed Array}.
Note that these lemmas can be applied repeatedly to simplify a substructure,
until either all arrow-heads are in cells that are unmarked and have
no arrow-tails, or an arrow-head is in the same cell as its own arrow-tail.
We are only interested in the former, as the latter implies that there
is a cycle in the functional digraph of $\phi$, which violates the
forest condition. This gives rise to the following definition.

\begin{figure}
\begin{centering}
\includegraphics{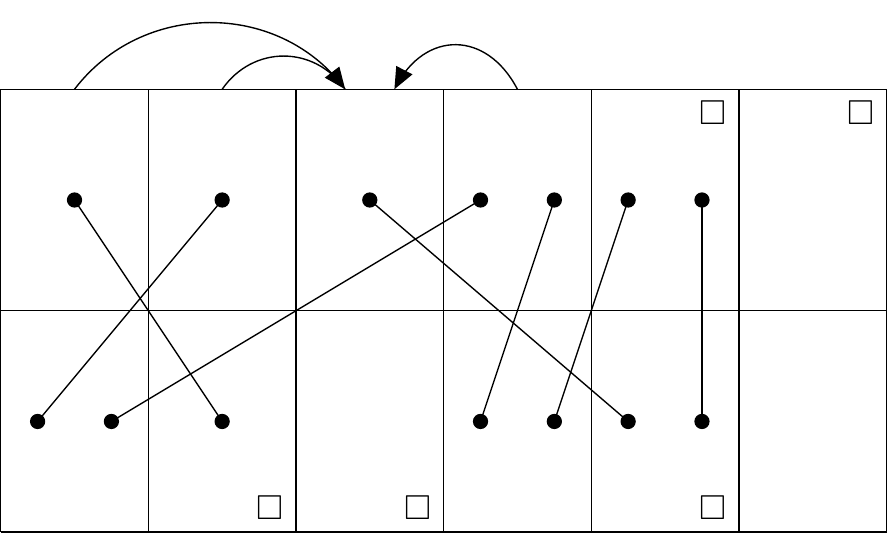}
\par\end{centering}
\caption{\label{fig:Irreducible Arrowed Array}Simplification of the arrowed
array in \ref{fig:Arrowed Array} into an irreducible array}
\end{figure}

\begin{defn}
\label{def:Irreducible Substructure Gamma}A substructure $\Gamma=\left(\mathbf{w},\R_{1},\R_{2},\phi\right)$
is \emph{irreducible} if the functional digraph of $\phi$ is acyclic,
and $\Gamma$ cannot be further simplified with the application of
the arrow simplification lemmas. Any cell of an irreducible substructure
containing an arrow-head must be unmarked in row 1, and cannot contain
an arrow-tail. Furthermore, it follows from definition that if an
irreducible substructure satisfies the full condition, then any cell
containing an arrow-head must also contain a critical vertex in row
1.

\label{def:Column type definition}If $\Gamma=\left(\mathbf{w},\R_{1},\R_{2},\phi\right)$
is an irreducible substructure, then we can categorize the columns
of $\Gamma$ as follows: Let $\A,\B,\C,\D$ be a partition of the
columns of $\K\backslash\H$, where

\begin{itemize}
\item Columns in $\A$ have both row 1 and row 2 unmarked
\item Columns in $\B$ have row 1 marked and row 2 unmarked
\item Columns in $\C$ have row 1 unmarked and row 2 marked
\item Columns in $\D$ have both row 1 and row 2 marked
\end{itemize}
Furthermore, if $\X$ is a column or a set of columns, let $\XB$
and $\XP$ be the sets of columns that have arrows pointing to $\X$,
and that have row 2 unmarked and marked, respectively. In particular,
$\AB$ and $\AP$ denotes the sets of columns pointing to $\A$, and
$\CB$ and $\CP$ denotes the sets of columns pointing to $\C$, with
row 2 unmarked and marked, respectively. These sets of columns implicitly
defined by $\Gamma$ are referred to as \emph{column types}, and a
diagram with all the column types can be found in \ref{fig:Column Types}.
\end{defn}
\begin{figure}
\begin{centering}
\includegraphics{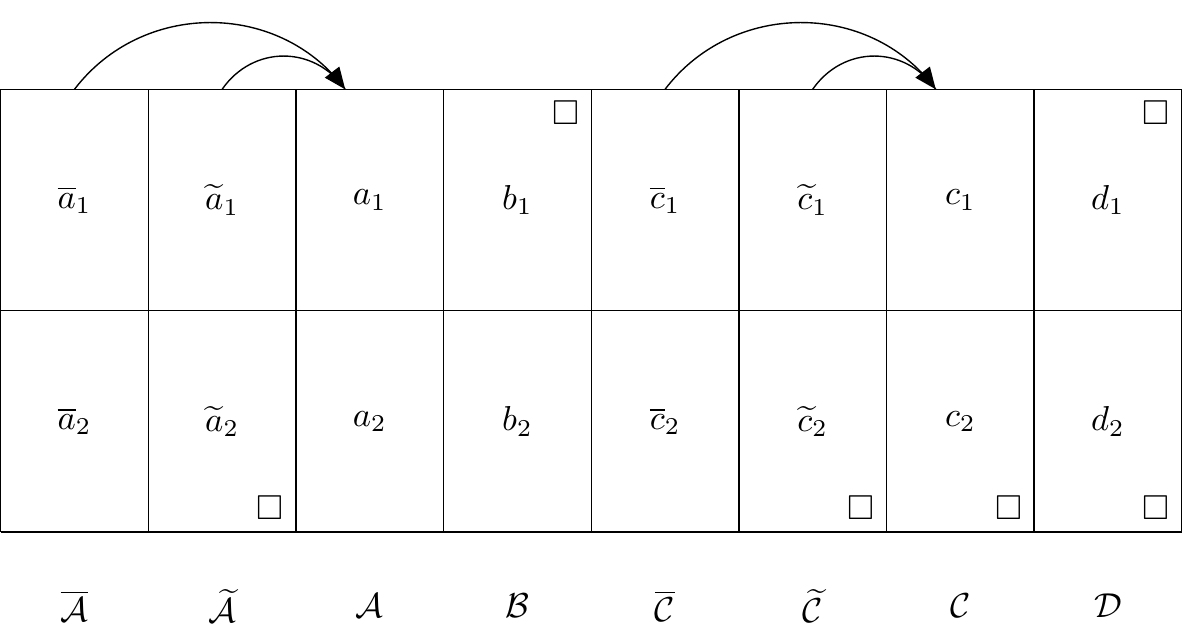}
\par\end{centering}
\caption{\label{fig:Column Types}Column types and variables for the number
of vertices}
\end{figure}

These eight column types form a partition of $\K$ on irreducible
substructures, and knowing the number of columns and the number of
vertices for each column type of $\Gamma$ is sufficient to count
the number of arrowed arrays satisfying it. However, before proving
the theorem for the number of arrowed arrays satisfying $\Gamma$,
we will need another two lemmas for simplifying arrowed arrays that
contain a fixed pair of vertices.
\begin{lem}
\label{lemma:Column Pointing Simplification}(column pointing) Let
$\Gamma=\left(\mathbf{w},\R_{1},\R_{2},\phi\right)$ be a substructure
of $\AR{K}{R_{1}}{R_{2}}{s}$, $v$ be a critical vertex in cell $\left(1,\X\right)$,
$u$ be a non-critical vertex in cell $\left(2,\Y\right)$, and $\X\neq\Y$.
Let the substructure $\Gamma_{vu}$ be the set of arrowed arrays that
satisfies $\Gamma$ and contains the pair $\left\{ v,u\right\} $,
and $\Gamma^{\prime}=\left(\mathbf{w}^{\prime},\R_{1},\R_{2},\phi^{\prime}\right)$
be a substructure of $\AR{K}{R_{1}}{R_{2}}{s-1}$ such that 
\begin{eqnarray*}
w_{i,j}^{\prime} & = & \begin{cases}
w_{i,j}-1 & \mbox{cell \ensuremath{\left(i,j\right)}contains \ensuremath{u} or \ensuremath{v}}\\
w_{i,j} & \mbox{otherwise}
\end{cases}\\
\phi^{\prime}\left(j\right) & = & \begin{cases}
\phi\left(j\right) & j\in\H\\
\Y & j=\X
\end{cases}
\end{eqnarray*}
Note that $\phi^{\prime}$ contains one more element in its domain
than $\phi$. Then, the number of arrowed arrays satisfying $\Gamma_{vu}$
and the number of arrowed arrays satisfying $\Gamma^{\prime}$ are
equal. Furthermore, $\Gamma_{vu}$ satisfies the non-empty and full
conditions if and only if $\Gamma^{\prime}$ satisfies them.
\end{lem}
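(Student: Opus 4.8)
The plan is to exhibit an explicit bijection between the arrowed arrays satisfying $\Gamma_{vu}$ and those satisfying $\Gamma'$. Given $(\alpha,\phi)$ satisfying $\Gamma_{vu}$, I would form $(\alpha',\phi')$ by deleting the critical vertex $v$ from cell $(1,\X)$, deleting its partner $u$ from cell $(2,\Y)$, discarding the edge $\{v,u\}$, and in its place adding an arrow from $\X$ to $\Y$, i.e., setting $\phi'(\X)=\Y$. Because $v$ is critical, cell $(1,\X)$ is unmarked and contains no arrow-tail, so $\X\notin\H\cup\R_1$ and the new arrow is admissible, its arrow-tail becoming the rightmost object of cell $(1,\X)$. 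The removed pair $\{v,u\}$ is mixed, so $\alpha$ has $s\ge1$ mixed pairs while $\alpha'$ has $s-1$; the marked cells are untouched and the cell counts become exactly $\mathbf{w}'$, so $(\alpha',\phi')$ will satisfy $\Gamma'$ as soon as one knows it satisfies the forest condition.

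Verifying that last point is the crux, and it reduces to showing that the operation leaves the functional digraphs of $\psi_1$ and $\psi_2$ --- vertex sets included --- literally unchanged. For $\psi_1$: since $v$ is the rightmost vertex of cell $(1,\X)$, in $(\alpha,\phi)$ one already has $\psi_1(\X)=\Y$ through the pair $\{v,u\}$, and in $(\alpha',\phi')$ the new arrow gives $\psi_1'(\X)=\phi'(\X)=\Y$, the same value (the hypothesis $\X\neq\Y$ keeps the arrow between distinct columns); for every other column the row-$1$ cell is untouched and its rightmost vertex keeps the same partner, as the only deleted pair involves $v$ and $u$ and neither is the rightmost vertex of another row-$1$ cell. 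One then checks that $\H\cup\F_1=\H'\cup\F_1'$ --- using $\X\in\F_1$ and $\X\in\H'$ --- so that the domains, and hence the digraphs, coincide. For $\psi_2$: the pairings changed only by removing $\{v,u\}$, and $u$ is non-critical, so either $u$ is not the rightmost vertex of $(2,\Y)$ --- in which case deleting it alters neither the rightmost vertex of $(2,\Y)$ nor any partner --- or cell $(2,\Y)$ is marked, in which case $\Y$ is a root of the $\psi_2$-digraph on which $\psi_2$ is undefined, whether or not it retains a vertex; either way the $\psi_2$-digraph is unchanged. Hence $(\alpha,\phi)$ satisfies the forest condition exactly when $(\alpha',\phi')$ does, and since the former does, $(\alpha',\phi')$ satisfies $\Gamma'$.

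For the inverse, every array satisfying $\Gamma'$ carries the arrow $\phi'(\X)=\Y$; I would delete it, reinsert vertices $v$ and $u$ into cells $(1,\X)$ and $(2,\Y)$ in their prescribed positions, and pair them. The resulting array has the same $\psi_1$- and $\psi_2$-digraphs as before, hence satisfies the forest condition, carries the data $(\mathbf{w},\R_1,\R_2,\phi)$, and contains $\{v,u\}$ with $v$ critical and $u$ non-critical --- these being properties of $\Gamma$ alone --- so it satisfies $\Gamma_{vu}$. The two maps are patently mutually inverse, so the number of arrowed arrays satisfying $\Gamma_{vu}$ equals the number satisfying $\Gamma'$. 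Finally, the only cells whose contents change are $(1,\X)$ and $(2,\Y)$, and each is non-empty both in $\Gamma$ (holding $v$, resp.\ $u$) and in $\Gamma'$ (cell $(1,\X)$ holds the new arrow-tail, and cell $(2,\Y)$ holds $w_{2,\Y}-1\ge1$ vertices, or, when $w_{2,\Y}=1$, a box, since then $u$ being non-critical forces $\Y\in\R_2$); as the non-empty and full conditions depend only on $\mathbf{w}$, $\R_1$, $\R_2$, $\phi$, they hold for $\Gamma_{vu}$ exactly when they hold for $\Gamma'$. The main obstacle is the forest-condition bookkeeping above, especially the borderline cases where a cell of column $\X$ or $\Y$ loses its last vertex, rather than any genuinely new idea.
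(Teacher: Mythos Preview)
Your proposal is correct and follows essentially the same approach as the paper: construct the obvious bijection by swapping the pair $\{v,u\}$ for an arrow $\X\to\Y$, observe that $\psi_1$ and $\psi_2$ are unchanged so the forest condition is preserved, and note that the two affected cells each retain an object so the non-empty and full conditions transfer. Your version is in fact more carefully argued than the paper's, especially in handling the domain bookkeeping $\H\cup\F_1=\H'\cup\F_1'$ and the borderline case $w_{2,\Y}=1$.
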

\begin{proof}
To prove that the number of arrowed arrays are equal, we provide a
bijection between arrowed arrays satisfying $\Gamma_{vu}$ and arrowed
arrays satisfying $\Gamma^{\prime}$. Let $\left(\alpha,\phi\right)$
be an arrowed array that satisfies $\Gamma$ and contains the pair
$\left\{ v,u\right\} $. As $u$ is not critical, removing the pair
$\left\{ v,u\right\} $ does not affect $\psi_{2}$. Therefore, we
can obtain an arrowed array $\left(\alpha^{\prime},\phi^{\prime}\right)$
by removing $\left\{ v,u\right\} $ and replacing it by an arrow pointing
from $\X$ to $\Y$, while keeping all the other pairs intact. This
reduces the number of vertices in $\left(1,\X\right)$ and $\left(2,\Y\right)$
by 1, and leaves $\psi_{1}$ unchanged. Hence, the forest condition
is preserved, and $\left(\alpha^{\prime},\phi^{\prime}\right)$ satisfies
$\Gamma^{\prime}$.

Conversely, given an arrowed array $\left(\alpha^{\prime},\phi^{\prime}\right)$
that satisfies $\Gamma^{\prime}$, we can remove the arrow pointing
from $\X$ to $\Y$ and replace it by the pair $\left\{ v,u\right\} $
given by $\Gamma_{vu}$. Since the positions of $v$ and $u$ are
fixed in $\Gamma_{vu}$, there is no ambiguity as to where to add
them. Again, the forest condition is preserved as $\psi_{1}$ and
$\psi_{2}$ are unchanged by this substitution. Finally, both cells
$\left(1,\X\right)$ and $\left(2,\Y\right)$ contain at least one
object in both $\Gamma_{vu}$ and $\Gamma^{\prime}$. Cell $\left(1,\X\right)$
contains either a critical vertex or an arrow-tail, and cell $\left(2,\Y\right)$
contains at least one other object as $u$ is not critical. Since
all other cells remain unchanged, $\Gamma_{vu}$ satisfies the non-empty
and full conditions if and only if $\Gamma^{\prime}$ satisfies them.
\end{proof}
\begin{lem}
\label{lemma:Column Merging Simplification}(column merging) Let $\Gamma=\left(\mathbf{w},\R_{1},\R_{2},\phi\right)$
be a substructure of $\AR{K}{R_{1}}{R_{2}}{s}$, $v$ be a critical
vertex in cell $\left(1,\X\right)$, $u$ be a critical vertex in
cell $\left(2,\Y\right)$, and $\X\neq\Y$. Suppose that $\Gamma$
satisfies the full condition, and without loss of generality, assume
that $\Y$ is the last column of $\Gamma$ for purposes of column
indexing. Let the substructure $\Gamma_{vu}$ be the set of arrowed
arrays that satisfies $\Gamma$ and contains the pair $\left\{ v,u\right\} $,
and $\Gamma^{\prime}=\left(\mathbf{w}^{\prime},\R_{1}^{\prime},\R_{2}^{\prime},\phi^{\prime}\right)$
be a substructure of $\AR{K-1}{R_{1}}{R_{2}}{s-1}$ such that 
\begin{eqnarray*}
\R_{i}^{\prime} & = & \begin{cases}
\R_{i}\cup\X\backslash\Y & \Y\in\R_{i}\\
\R_{i} & \mbox{otherwise}
\end{cases}\\
w_{i,j}^{\prime} & = & \begin{cases}
w_{i,j}+w_{i,\Y}-1 & j=\X\\
w_{i,j} & \mbox{otherwise}
\end{cases}\\
\phi^{\prime}\left(j\right) & = & \begin{cases}
\phi\left(\Y\right) & j=\X,\phi\left(\Y\right)\mbox{ is defined}\\
\X & j\in\H,\phi\left(j\right)=\Y\\
\phi\left(j\right) & j\in\H,\phi\left(j\right)\ne\Y
\end{cases}
\end{eqnarray*}
Then, the number of arrowed arrays satisfying $\Gamma_{vu}$ and the
number of arrowed arrays satisfying $\Gamma^{\prime}$ are equal.
Furthermore, $\Gamma^{\prime}$ also satisfies the full condition.
\end{lem}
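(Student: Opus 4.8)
The strategy is exactly parallel to the proof of the column pointing lemma (\ref{lemma:Column Pointing Simplification}): exhibit an explicit bijection between arrowed arrays satisfying $\Gamma_{vu}$ and arrowed arrays satisfying $\Gamma^{\prime}$, and then track the conditions. The new feature here is that both $v$ and $u$ are critical, so we cannot simply replace the pair $\{v,u\}$ by a single arrow as before; instead, since $v$ being paired with the critical vertex $u$ in column $\Y$ means $\psi_{1}(\X)=\Y$ in the original array, we should \emph{merge} column $\Y$ into column $\X$. Concretely, given $(\alpha,\phi)$ satisfying $\Gamma_{vu}$, remove the pair $\{v,u\}$; then for every vertex remaining in cells $(1,\Y)$ and $(2,\Y)$, move it into cells $(1,\X)$ and $(2,\Y)$ appended to the right of the existing vertices (respecting the ordered-list structure), and redirect any arrow or vertex pairing that pointed into a vertex of column $\Y$ so that it now points into the corresponding relocated vertex of column $\X$. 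Finally delete the now-empty column $\Y$ and relabel columns $1,\dots,K-1$. This produces $(\alpha^{\prime},\phi^{\prime})$ with the stated $\mathbf w^{\prime}$, $\R_i^{\prime}$, and $\phi^{\prime}$. The inverse map splits column $\X$ back apart: the data of $\Gamma^{\prime}$ and $\Gamma_{vu}$ (which records exactly which vertices and cells receive $v,u$, and which columns pointed to $\Y$) makes the splitting unambiguous.

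I would organize the argument in four steps. \textbf{Step 1:} check the forward map is well-defined, i.e.\ that $(\alpha^{\prime},\phi^{\prime})$ is a genuine arrowed array in $\VA{K-1}{R_1}{R_2}{s-1}$ with marking sets $\R_i^{\prime}$ — this is just bookkeeping of vertex counts, noting $s$ drops by one because one mixed pair is removed, and $q_1=q_2=0$ throughout so no non-mixed pairs are created. \textbf{Step 2:} verify $(\alpha^{\prime},\phi^{\prime})$ satisfies the forest condition iff $(\alpha,\phi)$ does. This is the heart of the proof: the functional digraph of $\psi_2$ changes only by contracting the vertex $\Y$ into $\X$ (since removing $\{v,u\}$ with $u$ critical means $\psi_2(\Y)$ was defined; after the merge the rightmost vertex of the merged column-$\X$ row 2 is whatever was rightmost in $(2,\Y)$, so $\psi_2^{\prime}(\X)=\psi_2(\Y)$, and any column that had $\psi_2=\Y$ now has $\psi_2=\X$), and likewise $\psi_1$ is contracted along the edge $\X\to\Y$. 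Contracting an edge of a functional digraph that is a forest-with-specified-roots yields a forest with the same roots, and conversely, precisely because $\X\neq\Y$ and the merge does not create a cycle — the edge $\X\to\Y$ being present in $\psi_1$ of the original is what guarantees the contraction is legitimate. \textbf{Step 3:} check the full condition for $\Gamma^{\prime}$: every cell of $\Gamma^{\prime}$ other than $(1,\X),(2,\X)$ is a cell of $\Gamma$ and hence nonempty by hypothesis, while $(i,\X)$ in $\Gamma^{\prime}$ contains $w_{i,\X}+w_{i,\Y}-1\ge w_{i,\Y}-1\ge 0$ vertices plus, in row $1$, possibly a box or the inherited arrow-tail; one must observe that since $\Gamma$ is full, cell $(2,\Y)$ contained at least one object, and $u$ critical means it was the \emph{only} way $(2,\Y)$ could fail to have a further object, so after removing $u$ we need $w_{2,\Y}-1+w_{2,\X}\ge 1$, which holds because $w_{2,\X}\ge 1$ (cell $(2,\X)$ was nonempty in the full $\Gamma$ and contains only vertices in row 2 so $w_{2,\X}\ge1$). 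A symmetric remark handles row 1. \textbf{Step 4:} confirm the two maps are mutually inverse, which follows because all the relocation data is recorded in the substructures.

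\textbf{Main obstacle.} The delicate point is Step 2, specifically making precise that redirecting the incoming arrows and pairings of column $\Y$ onto column $\X$ corresponds exactly to contracting the edge $\X\to\Y$ in $\psi_1$, and that no forest-violating cycle can appear. The subtlety is that after the merge the rightmost vertex of cell $(1,\X)$ is no longer $v$ (which was deleted) but rather the former rightmost vertex of $(1,\Y)$, so one must verify $\psi_1^{\prime}(\X)$ equals what $\psi_1(\Y)$ was — i.e.\ that the merge genuinely realizes contraction and not something else — and handle the boundary cases where $(1,\Y)$ had no vertex other than... but in fact $u$'s partner considerations and fullness force the counts to cooperate. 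I also need to be slightly careful that the assumption "$\Y$ is the last column" is only a harmless indexing convention (permuting columns preserves all conditions, as noted after \ref{fig:Arrowed Array}), so there is no loss of generality. Once the edge-contraction picture is set up cleanly, the forest-condition equivalence and the counting bijection are both immediate.
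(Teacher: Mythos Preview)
Your overall strategy—merge columns $\X$ and $\Y$, delete the pair $\{v,u\}$, and interpret the effect on each $\psi_i$ as an edge contraction—is exactly the paper's approach. However, your merge is wrong in row~2, and this breaks Step~2. You propose appending the surviving vertices of $(2,\Y)$ \emph{to the right} of the vertices of $(2,\X)$, and you then assert $\psi_2'(\X)=\psi_2(\Y)$. But $u$ is the critical vertex of $(2,\Y)$ and is paired with $v\in(1,\X)$, so $\psi_2(\Y)=\X$; your claim would make $\psi_2'(\X)=\X$, a self-loop. More concretely, with your ordering the new rightmost vertex of the merged $(2,\X)$ is the \emph{second}-rightmost vertex of the old $(2,\Y)$ (when $w_{2,\Y}\ge 2$), and its partner lies in some arbitrary column unrelated to either $\psi_2(\X)$ or $\psi_2(\Y)$; the functional digraph of $\psi_2'$ need not be a forest at all, so the map does not land in $\AR{K-1}{R_1}{R_2}{s-1}$.

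The fix is to treat the two rows asymmetrically: place the vertices of $(1,\Y)$ \emph{after} those of $(1,\X)$ (as you do), but place the vertices of $(2,\Y)$ \emph{before} those of $(2,\X)$. This preserves the rightmost object of $(1,\Y)$ and the rightmost object of $(2,\X)$ in the merged column, so that $\psi_1'(\X)=\psi_1(\Y)$ and $\psi_2'(\X)=\psi_2(\X)$. These are precisely the outgoing edges of the merged vertex after contracting the edge $\X\to\Y$ in the $\psi_1$-digraph and the edge $\Y\to\X$ in the $\psi_2$-digraph, respectively, and now the forest-condition equivalence goes through. (Minor: you also have a typo ``$(2,\Y)$'' for ``$(2,\X)$'' in your description of the forward map.)
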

\begin{proof}
To prove that the number of arrowed arrays are equal, we provide a
bijection between arrowed arrays satisfying $\Gamma_{vu}$ and arrowed
arrays satisfying $\Gamma^{\prime}$. The idea behind this bijection
is to merge the columns $\X$ and $\Y$ in such a way that keeps the
rightmost objects of cell $\left(2,\X\right)$ and $\left(1,\Y\right)$
intact. As all other cells remain unchanged, $\Gamma^{\prime}$ satisfies
the full condition.

Let $\left(\alpha,\phi\right)$ be an arrowed array that satisfies
$\Gamma$ and contains the pair $\left\{ v,u\right\} $. To obtain
$\alpha^{\prime}$, we take the vertices of cell $\left(2,\Y\right)$
and place them in cell $\left(2,\X\right)$ in order, before the vertices
originally in $\left(2,\X\right)$. Then, for any column $j$ that
points to $\Y$, we change them to point to $\X$ instead. Similarly,
we take the vertices of cell $\left(1,\Y\right)$ and place them in
cell $\left(1,\X\right)$, but after the vertices originally in $\left(1,\X\right)$.
Furthermore, we mark cell $\left(1,\X\right)$ if cell $\left(1,\Y\right)$
is marked, and make $\X$ point to a column $\Z$ if column $\Y$
points to $\Z$ originally. Finally, we remove the pair $\left\{ v,u\right\} $
and the column $\Y$. Conversely, given an arrowed array $\left(\alpha^{\prime},\phi^{\prime}\right)$
that satisfies $\Gamma^{\prime}$, we can recover $\left(\alpha,\phi\right)$
by simply reversing the steps. As the arrows in row 1 and the number
of vertices in each cell is given by $\Gamma$, the reverse is unambiguous.

By construction, $\left(\alpha,\phi\right)$ satisfies $\Gamma_{vu}$
if and only if $\left(\alpha^{\prime},\phi^{\prime}\right)$ satisfies
$\Gamma^{\prime}$, with the possible exception of the forest condition.
Now, the critical pair $\left\{ u,v\right\} $ gives the edge $\left(\X,\Y\right)$
in the functional digraph of $\psi_{1}$, and the edge $\left(\Y,\X\right)$
in the functional digraph of $\psi_{2}$. By merging these two columns,
we are contracting these two edge in their respective functional digraph.
Therefore, $\psi_{i}$ satisfies the forest condition if and only
if $\psi_{i}^{\prime}$ satisfies it, for $i=1,2$. This shows that
the numbers of arrowed arrays satisfying $\Gamma_{vu}$ and $\Gamma^{\prime}$
are equal.
\end{proof}
The application of \ref{lemma:Column Pointing Simplification} to
replace $\Gamma_{vu}$ with $\Gamma^{\prime}$ is called the \emph{column
pointing procedure}, and a diagram of this procedure can be found
in \ref{fig:Column Pointing}. Similarly, the application of \ref{lemma:Column Merging Simplification}
to replace $\Gamma_{vu}$ with $\Gamma^{\prime}$ is called the \emph{column
merging procedure}, and a diagram of this procedure can be found in
\ref{fig:Column Merging}. After applying either procedure, we can
apply the arrow simplification lemmas to $\Gamma^{\prime}$ to further
simplify the substructure.

Note that unlike the other simplification lemmas, column merging requires
the substructure to satisfy the full condition. In particular, it
requires each cell of the columns being merged to be non-empty. Otherwise,
the resulting column will completely drop out of the forest condition,
which can break the bijection.

\begin{figure}
\begin{centering}
\includegraphics{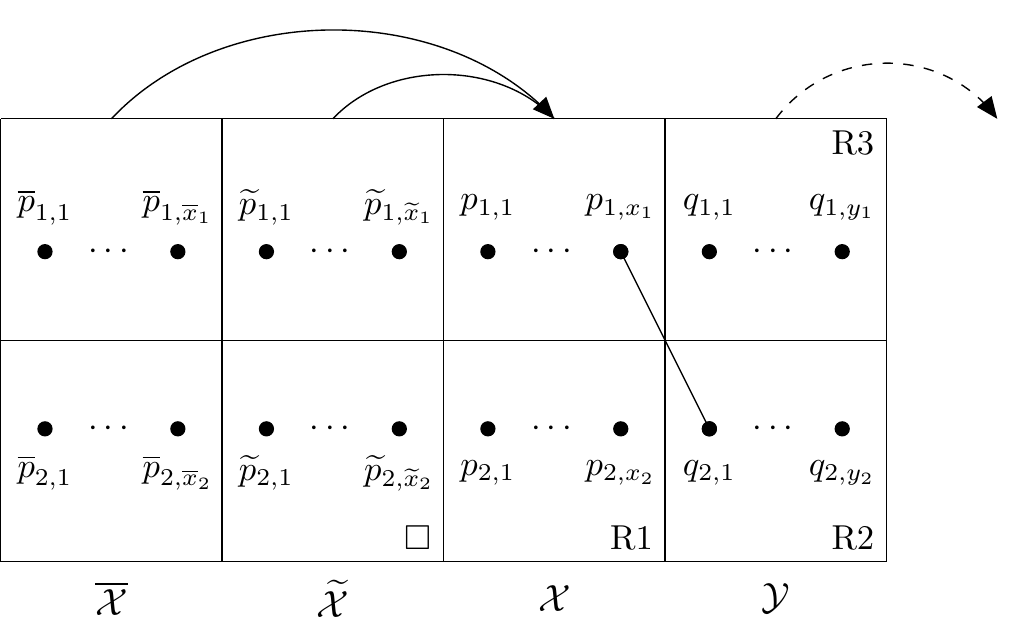}
\par\end{centering}
\begin{centering}
\includegraphics{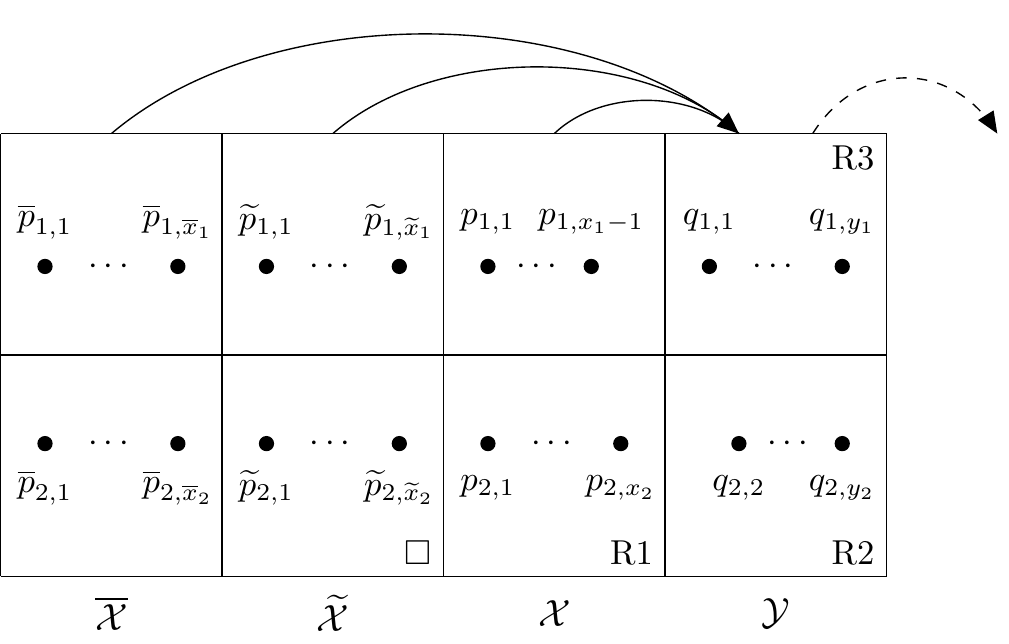}
\par\end{centering}
By applying the column pointing procedure to the top figure, we arrive
at the bottom figure. Here, $u=p_{1,x_{1}}$ and $v=q_{2,1}$. R1,
R2, and R3 can be arbitrary in whether they are marked, but they must
be the same between the two figures. The same holds for the optional
arrow with $\Y$ as its tail.

\caption{\label{fig:Column Pointing}Column pointing}
\end{figure}

\begin{figure}
\begin{centering}
\includegraphics{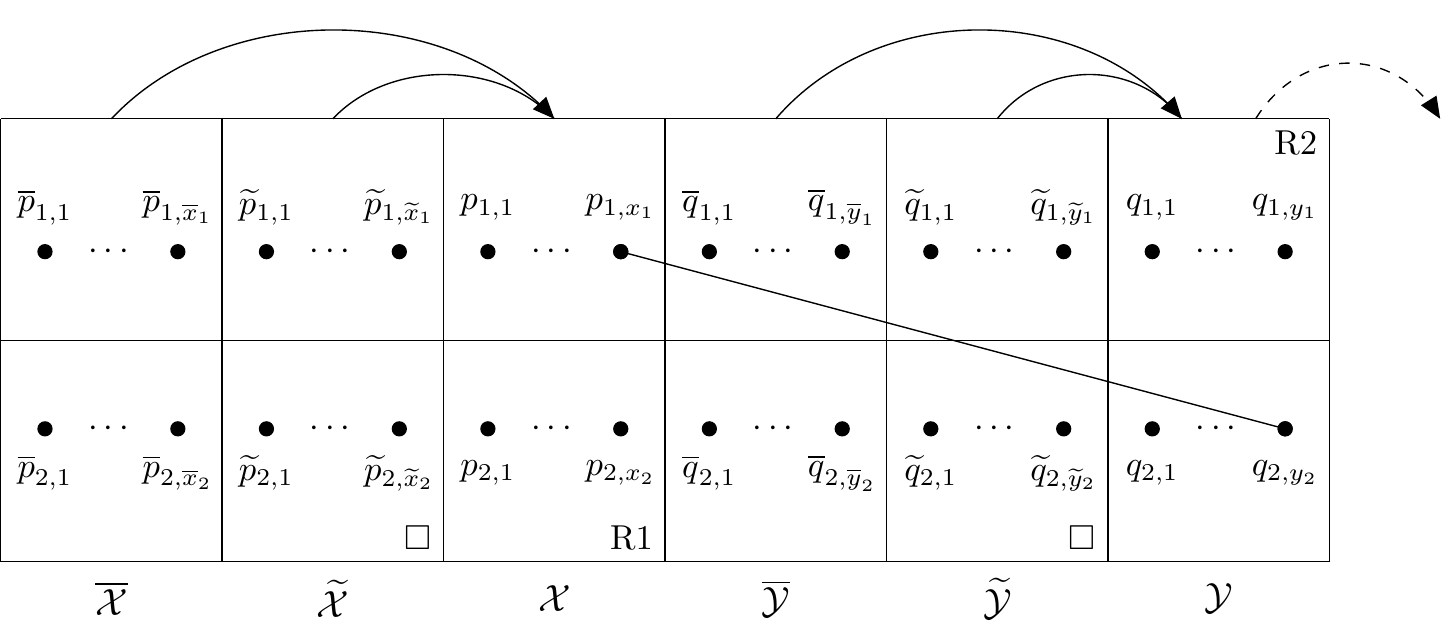}
\par\end{centering}
\begin{centering}
\includegraphics{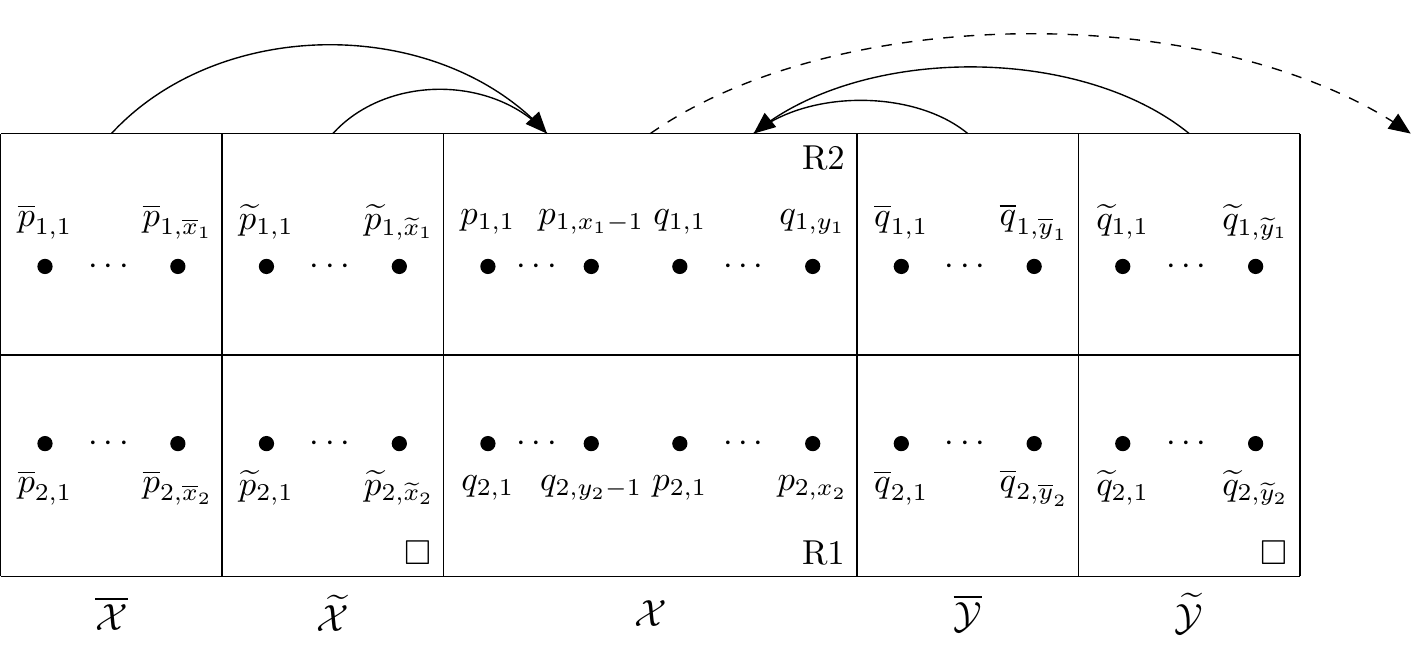}
\par\end{centering}
By applying the column merging procedure to the top figure, we arrive
at the bottom figure. Here, $u=p_{1,x_{1}}$ and $v=q_{2,y_{2}}$.
R1 and R2 can be arbitrary in whether they are marked, but they must
be the same between the two figures. The same holds for the optional
arrow with $\Y$ as its tail.

\caption{\label{fig:Column Merging}Column merging}
\end{figure}

\section{\label{sec:Substructure Gamma Formula}Enumeration of Substructure
\texorpdfstring{$\Gamma=\left(\mathbf{w},\R_{1},\R_{2},\phi\right)$}{Gamma}}

Now, we have everything we need to provide a formula for the number
of arrowed arrays satisfying the substructure $\Gamma=\left(\mathbf{w},\R_{1},\R_{2},\phi\right)$,
where $\Gamma$ is an irreducible substructure satisfying the full
condition. The formula will be given by the number of vertices in
each column type, as well as the number of columns of type $\A$.
Let $T\left(\Gamma\right)$ be the number of arrowed arrays that satisfy
the substructure $\Gamma$, then the following theorem gives the formula
for $T\left(\Gamma\right)$.
\begin{thm}
\label{thm:Substructure general formula}Given an irreducible substructure
$\Gamma=\left(\mathbf{w},\R_{1},\R_{2},\phi\right)$ that satisfies
the full condition with $s\ge A+2$, the number of arrowed arrays
$\left(\alpha,\phi\right)\in\AR{K}{R_{1}}{R_{2}}{s}$ that satisfy
$\Gamma$ is given by the formula
\[
T\left(\Gamma\right)=\left(s-1\right)!\left[\frac{\left(b_{2}+d_{2}\right)\left(\ap_{1}+c_{1}+\cp_{1}+d_{1}\right)}{s-A}+\frac{b_{1}\left(c_{2}+\cb_{2}+\cp_{2}\right)-\cb_{1}\left(b_{2}+d_{2}\right)}{\left(s-A\right)\left(s-A-1\right)}\right]
\]
In the case where $s=A+1$, the formula reduces to 
\[
T\left(\Gamma\right)=\left(s-1\right)!\left(b_{2}+d_{2}\right)\left(\ap_{1}+c_{1}+\cp_{1}+d_{1}\right)
\]
\end{thm}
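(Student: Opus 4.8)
The plan is to compute $T(\Gamma)$ by induction on $s$, using the column pointing and column merging lemmas to strip off one mixed pair at a time. First I would reformulate the problem: since $\Gamma$ fixes $\mathbf w,\R_1,\R_2,\phi$ and the underlying array is vertical, a pairing on $\Gamma$ is nothing but a bijection between the $s$ row-1 vertices and the $s$ row-2 vertices, and the only further requirement for membership in $\AR{K}{R_1}{R_2}{s}$ is the forest condition. In an irreducible substructure satisfying the full condition the critical row-1 vertices are exactly the rightmost row-1 vertices of the columns in $\A\cup\C$, the critical row-2 vertices are exactly the rightmost row-2 vertices of the columns in $\A\cup\B\cup\AB\cup\CB$, the roots of $\psi_1$ are the row-1-marked columns $\R_1=\B\cup\D$, and the roots of $\psi_2$ are the row-2-marked columns $\R_2=\C\cup\D\cup\AP\cup\CP$. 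I would also record the dictionary that makes the target expression legible: $b_2+d_2$ is the number of row-2 vertices lying in $\R_1$-columns, $\ap_1+c_1+\cp_1+d_1$ is the number of row-1 vertices lying in $\R_2$-columns, and $c_2+\cb_2+\cp_2$ (respectively $\cb_1$) counts the row-2 (respectively row-1) vertices in the columns of $\C$ together with the columns pointing to $\C$.

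For the base case $s=A+1$ the data is rigid: there is essentially one non-critical mixed pair carrying all the freedom, the second (doubly divided) term of the formula is vacuous, and a direct count gives $(s-1)!(b_2+d_2)(\ap_1+c_1+\cp_1+d_1)$; the degenerate situations with few critical vertices — $\A\cup\C$ empty, so that only $\psi_2$ is constrained, or also $\B$ empty, so that both forests are trivial and $T(\Gamma)=s!$ — are handled directly as additional base cases. For the inductive step with $s\ge A+2$, pick a critical row-1 vertex $v$ in some column $\X$ (a critical row-2 vertex if none exists) and split $T(\Gamma)$ according to the column $\Y$ containing its partner $u$, noting that $\Y\ne\X$ always since a partner in $\X$ would put a loop in $\psi_1$. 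If $u$ is non-critical, \ref{lemma:Column Pointing Simplification} replaces $\Gamma_{vu}$ by a substructure with $s'=s-1$ in which $\X$ has left $\A\cup\C$ and gained an arrow to $\Y$; if $u$ is critical, \ref{lemma:Column Merging Simplification} merges $\X$ and $\Y$, again dropping $s$ by one and preserving the full condition. In both cases one then applies \ref{lemma:Arrow Simplification Gamma-1} and \ref{lemma:Arrow Simplification Gamma-2} to return to an irreducible substructure, reads off how the eight column-type counts (numbers of columns, and in each row numbers of vertices) have moved as a function of the type of $\Y$, and sums the inductive value $T(\Gamma')$ weighted by the number of available row-2 vertices of each type. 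The assertion to verify is that this weighted sum collapses to the stated closed form.

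The main obstacle is exactly this collapse. One must track, case by case over the eight possible types of $\Y$ — and, for column merging, over the four mark-patterns of the pair $\X,\Y$ — how column pointing or merging followed by arrow simplification redistributes vertices and marks among $\A,\B,\C,\D,\AB,\AP,\CB,\CP$, and then check that the resulting linear combination of closed-form values, with the denominators $s-A$ and $(s-A)(s-A-1)$ replaced throughout by their $\Gamma'$-analogues, reproduces the formula for $T(\Gamma)$ after simplification. It helps to organize the induction around the quantity $s-A$: column pointing off a type-$\A$ column leaves $s-A$ fixed while decreasing $A$, whereas every other move lowers $s-A$, which is why $s=A+1$ is the genuine anchor and why the second term disappears there. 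Two consistency checks guard against slips: the full condition must be seen to be inherited at each step (this is the content of the ``furthermore'' clauses of the three simplification lemmas), and setting $\phi$ empty should specialize every identity to the four-column-type case, where it can be cross-checked against a direct count of the two coupled forests.
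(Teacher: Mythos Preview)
Your plan matches the paper's proof: induct by fixing a critical row-1 vertex $v$, ranging over its partner $u$, applying column pointing or column merging followed by arrow simplification, and then verifying that the weighted sum over the column types of $\Y$ collapses to the stated closed form. The paper organizes the induction a little differently from what you sketch. It inducts on the total number of vertices, tiebroken by the number of critical row-2 vertices; the tiebreak is exactly what licenses the row-swap when $\A\cup\C=\emptyset$, since that move does not decrease $s$, so your induction on $s$ alone would stall there. The paper also explicitly chooses $\X$ of type $\A$ when one exists, then of type $\C$ if not, and only then swaps rows; this stratification is what keeps the bookkeeping for the collapse to three reference quantities $T(\Gamma_{\A\A}),T(\Gamma_{\A\C}),T(\Gamma_{\A\D})$ (and their Case~2 analogues) tractable.

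One substantive divergence: the paper does \emph{not} treat $s=A+1$ as a base case amenable to a direct count. It runs the very same induction in that regime, simply substituting the one-term formula throughout so that the vanishing denominator $(s-A-1)$ never appears. Your description of $s=A+1$ as ``rigid'' with ``essentially one non-critical mixed pair carrying all the freedom'' is not accurate in general: the full condition forces every unmarked cell in every column type to carry at least one vertex, so columns of types $\B,\C,\CB,\AB$ still contribute critical vertices and the pairing problem is not obviously simpler than for larger $s-A$. The paper's genuine second base case is instead the very concrete $s=2$, $\A=\emptyset$, $\C\neq\emptyset$, which it dispatches by exhaustive inspection of the row-1 vertex positions.
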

By the convention set out in \ref{conv:Array Convention}, we let
a lower case variable $x_{i}$ represent the total number of points
in row $i$ of the columns of type $\X$, and $A$ represent the number
of columns of type $\A$.
\begin{proof}
We prove this via induction on the total number of vertices, and tiebreak
by the number of critical vertices in row 2. There are two base cases
and three inductive cases to consider, depending on whether $\Gamma$
contains a column of type $\A$, a column of type $\C$ and no columns
of type $\A$, or no columns of type $\A$ or $\C$. Also, we will
only do the proof for $s\ge A+2$. In the case where $s=A+1$, the
proof is the same, but we have to use the second formula to avoid
division by zero.

\textbf{Base case 1:}

Suppose $\Gamma$ has no critical vertex. As $\Gamma$ is irreducible,
each cell must either be marked or have an arrow-tail. However, the
latter cannot happen as an arrow-head of an irreducible substructure
must be in an unmarked cell. Hence, every cell of $\Gamma$ must be
marked, so the forest condition is trivially satisfied. Therefore,
there are $s!$ ways to pair the vertices of the array. By substituting
$d_{1}=d_{2}=s$ into $T\left(\Gamma\right)$, and setting all other
variables to 0, we see that $T\left(\Gamma\right)=s!$ as desired.

\textbf{Base case 2:}

If $s=2$, $\A=\emptyset$, and $\C\neq\emptyset$, then 
\begin{eqnarray*}
T\left(\Gamma\right) & = & \left[\frac{\left(b_{2}+d_{2}\right)\left(c_{1}+\cp_{1}+d_{1}\right)}{2}+\frac{b_{1}\left(c_{2}+\cb_{2}+\cp_{2}\right)-\cb_{1}\left(b_{2}+d_{2}\right)}{2}\right]\\
 & = & b_{1}+\left(1-b_{1}-\cb_{1}\right)\left(b_{2}+d_{2}\right)
\end{eqnarray*}
by substituting in $2=b_{i}+\cb_{i}+\cp_{i}+c_{i}+d_{i}$. This case
is needed as the inductive step for $\Gamma$ containing no columns
of type $\A$ but at least one column of type $\C$ requires that
$T\left(\Gamma\right)$ be true for $s-1$. However, if $s=1$, then
$s<A+2$, and this creates a zero in the denominator of our formula.
The formula can be proved by checking all possible positions of the
vertices in row 1. The details are omitted as it is tedious and not
enlightening.

\textbf{Case 1:}

Suppose $\Gamma$ contains at least one column of type $\A$, and
$\X$ is one such column. Let $\XB$ and $\XP$ be columns pointing
to $\X$ as defined in \ref{def:Column type definition}, and note
that they are columns of type $\AB$ and $\AP$, respectively. Then,
the critical vertex $v$ of cell $\left(1,\X\right)$ must be paired
with some vertex $u$ in a cell $\left(2,\Y\right)$. To satisfy the
forest condition for row 1, $\Y$ cannot be a column of $\X$, $\XB$,
or $\XP$. By fixing $u$, we can pair vertices $u$ and $v$ to obtain
the substructure $\Gamma_{uv}$. Then, we simplify $\Gamma_{uv}$
using the column pointing and column merging procedures described
in \ref{lemma:Column Pointing Simplification} and \ref{lemma:Column Merging Simplification},
which makes the columns of $\X$, $\XB$, and $\XP$ point to $\Y$.
Now, $\Y$ cannot point to $\X$, $\XB$, or $\XP$, as that would
either imply that $\Y\in\XB\cup\XP$, or that $\Gamma$ is not irreducible.
Therefore, $\Y$ must either not contain an arrow-tail, or be pointing
to some other column $\Z$ that has a critical vertex in row 1. Therefore,
the functional digraph of $\phi$ is acyclic, and by using the arrow
simplification procedures described in \ref{lemma:Arrow Simplification Gamma-1}
and \ref{lemma:Arrow Simplification Gamma-2}, we obtain an irreducible
substructure $\Gamma^{\prime}$ that has one less vertex per row than
$\Gamma$. Furthermore, both $s$ and $A$ decrease by 1, so the inequality
$s\ge A+2$ holds. Depending on the column type of $\Y$ and whether
$u$ is critical, we can use the inductive hypothesis to determine
$T\left(\Gamma^{\prime}\right)$ in terms of existing parameters given
by the column types of $\Gamma$.

For example, let $\Y$ be a column of type $\D$. Then, after applying
the column pointing procedure, $\X$ becomes a column of type $\B$,
the columns of $\XB$ become columns of type $\B$, and the columns
of type $\XP$ become columns of type $\D$. Hence, in the resulting
substructure $\Gamma^{\prime}=\Gamma_{\A\D}$ after simplification,
we have

\begin{itemize}
\item $a_{i}^{\prime}=a_{i}-x_{i}$
\item $\ab_{i}^{\prime}=\ab_{i}-\xb_{i}$
\item $\ap_{i}^{\prime}=\ap_{i}-\xp_{i}$
\item $b_{i}^{\prime}=b_{i}+x_{i}+\xb_{i}-\delta_{1,i}$
\item $d_{i}^{\prime}=d_{i}+\xp_{i}-\delta_{2,i}$
\end{itemize}
where $\delta_{i,j}=1$ if $i=j$, and 0 otherwise. Substituting this
into the inductive hypothesis, we have 
\begin{eqnarray*}
T\left(\Gamma_{\A\D}\right) & = & \left(s-2\right)!\left[\frac{\left(b_{2}+x_{2}+\xb_{2}+d_{2}+\xp_{2}-1\right)\left(\ap_{1}+c_{1}+\cp_{1}+d_{1}\right)}{s-A}+\right.\\
 &  & \left.\frac{\left(b_{1}+x_{1}+\xb_{1}-1\right)\left(c_{2}+\cb_{2}+\cp_{2}\right)-\cb_{1}\left(b_{2}+x_{2}+\xb_{2}+d_{2}+\xp_{2}-1\right)}{\left(s-A\right)\left(s-A-1\right)}\right]
\end{eqnarray*}

Similarly, we define $T\left(\Gamma_{\A\A}\right)$ and $T\left(\Gamma_{\A\C}\right)$
to be the number of arrowed arrays satisfying substructure $\Gamma^{\prime}$
if $v$ is in a column of type $\A$ and $\C$, respectively. Then,
we repeat this computation for the remaining possible column types
of $\Y$, and whether $u$ is critical. These are given by the column
types $\A$, $\AB$, $\AP$, $\B$, $\C$, $\CB$, and $\CP$. In
the cases of $\A$, $\AB$, $\B$, and $\CB$, the particular substitutions
are dependent on whether $v$ is also critical, even though the formulas
for $T\left(\Gamma^{\prime}\right)$ are the same. Furthermore, these
can all be expressed in terms of $T\left(\Gamma_{\A\A}\right)$, $T\left(\Gamma_{\A\C}\right)$,
and $T\left(\Gamma_{\A\D}\right)$. By letting $u$ range across all
vertices of row 2, we obtain all possible pairings of the critical
vertex $v$ in column $\X$. Therefore, by counting the number of
vertices of each column type, we obtain the number of occurrences
of each $\Gamma^{\prime}$. Adding everything together, we have 
\begin{eqnarray*}
T\left(\Gamma\right) & = & \left(a_{2}-x_{2}+\ab_{2}-\xb_{2}+\ap_{2}-\xp_{2}\right)T\left(\Gamma_{\A\A}\right)+\\
 &  & \left(c_{2}+\cb_{2}+\cp_{2}\right)T\left(\Gamma_{\A\C}\right)+\left(b_{2}+d_{2}\right)T\left(\Gamma_{\A\D}\right)
\end{eqnarray*}
By substituting in $s=a_{i}+\ab_{i}+\ap_{i}+c_{i}+\cb_{i}+\cp_{i}+b_{i}+d_{i}$
and simplifying, we can show that $T\left(\Gamma\right)$ satisfies
the inductive hypothesis. This proves the case where $\Gamma$ contains
a column of type $\A$.

\textbf{Case 2:}

Suppose that $\Gamma$ does not contain columns of type $\A$, but
contains at least one column of type $\C$. The formula simplifies
to 
\[
T\left(\Gamma\right)=\left(s-1\right)!\left[\frac{\left(b_{2}+d_{2}\right)\left(c_{1}+\cp_{1}+d_{1}\right)}{s}+\frac{b_{1}\left(c_{2}+\cb_{2}+\cp_{2}\right)-\cb_{1}\left(b_{2}+d_{2}\right)}{s\left(s-1\right)}\right]
\]

While the formula is simpler in this case, the proof is slightly more
involved. Let $\X$ be a fixed column of type $\C$, and let $\XB$
and $\XP$ be columns pointing to $\X$ as defined in \ref{def:Column type definition}.
Note that they are columns of type $\CB$ and $\CP$, respectively.
As in Case 1, the critical vertex $v$ of cell $\left(1,\X\right)$
must be paired with some vertex $u$ in a cell. Again, to satisfy
the forest condition for row 1, $\Y$ cannot be a column of $\X$,
$\XB$ or, $\XP$. Therefore, we pair $u$ and $v$ to obtain the
substructure $\Gamma_{uv}$, which we simplify using the same lemmas
used in Case 1 to obtain an irreducible substructure $\Gamma^{\prime}$.
As the case $s=2$ is already handled, we can assume $s\ge3$, so
$s\ge A+2$ still holds. Depending on the column type of $\Y$ and
whether $u$ is critical, we can use the inductive hypothesis to determine
$T\left(\Gamma^{\prime}\right)$ in terms of existing parameters given
by column types of $\Gamma$. The major difference in this case is
that if $u$ is a critical vertex, then both $\X$ and $\Y$ become
columns of a different type, so we must introduce the parameters $y_{i}$
for the number of vertices in column $i$ of $\Y$.

As in Case 1, we define $T\left(\Gamma_{\C\B}\right)$ and $T\left(\Gamma_{\C\C}\right)$
to be the number of arrowed arrays satisfying substructure $\Gamma^{\prime}$
if $v$ is in a column of type $\B$ and $\C$, respectively. However,
we also need the correction terms $T_{\C\B c}$ and $T_{\C\CB c}$
for the cases of $\B$, and $\CB$, depending on whether the vertex
$v$ is critical. Then, we can compute $T\left(\Gamma^{\prime}\right)$
for all possible column types of $\Y$, and whether $u$ is critical.
These are given by the column types $\B$, $\C$, $\CB$, $\CP$,
and $\D$, and can all be expressed in terms of $T\left(\Gamma_{\A\B}\right)$,
$T\left(\Gamma_{\A\C}\right)$, $T_{\C\B c}$, and $T_{\C\CB c}$.

By letting $u$ range across all vertices of row 2, we obtain all
possible pairings of the critical vertex $v$ in column $\X$. Notice
that as we pair $v$ each vertex of $\B$, we add $y_{1}T_{\C\B c}$
if and only if $u$ is the rightmost vertex of $\Y$. Since each column
of $\B$ has exactly one rightmost vertex, $\sum_{\Y\in\B}y_{1}=b_{1}$.
Similarly, $\sum_{\Y\in\CB}y_{1}=\cb_{1}-\xb_{1}$. Therefore, by
counting the number of vertices of each column type, we obtain the
number of occurrences of each $\Gamma^{\prime}$. Adding everything
together, we have 
\begin{eqnarray*}
T\left(\Gamma\right) & = & \left(c_{2}-x_{2}+\cb_{2}-\xb_{2}+\cp_{2}-\xp_{2}\right)T\left(\Gamma_{\C\C}\right)+\\
 &  & \left(\cb_{1}-\xb_{1}\right)T_{\C\CB c}+\left(b_{2}+d_{2}\right)T\left(\Gamma_{\C\B}\right)+b_{1}T_{\C\B c}
\end{eqnarray*}
By substituting in $s=c_{2}+\cb_{2}+\cp_{2}+b_{2}+d_{2}$ and simplifying,
we can show that $T\left(\Gamma\right)$ satisfies the inductive hypothesis.
This proves the case where $\Gamma$ contains a column of type $\C$,
but no columns of type $\A$.

\textbf{Case 3:}

If $\Gamma$ does not contain any column of type $\A$ or $\C$, then
every cell in row 1 is marked, leaving us only with columns of type
$\B$ and $\D$. In this case, the formula simplifies to 
\[
T\left(\Gamma\right)=d_{1}\left(s-1\right)!
\]
as $s=b_{2}+d_{2}$. Since $\Gamma$ does not contain any arrows,
we can switch the two rows and invert the roles of $\B$ and $\C$
to obtain $\Gamma^{\prime}$. Furthermore, at least one cell in row
2 is unmarked, as otherwise we would have the base case. Therefore,
the number of critical vertices in row 2 decreases in $\Gamma^{\prime}$,
and we can continue the induction using Case 2. Furthermore, neither
$s$ nor $A$ changed, so $s\ge A+2$ still holds. Now, $\Gamma^{\prime}$
only have columns of type $\C$ and $\D$, so by the inductive hypothesis,
\[
T\left(\Gamma^{\prime}\right)=d_{2}\left(s-1\right)!
\]
as $s=c_{1}+d_{1}$ in $\Gamma^{\prime}$. This completes the induction
and proves our formula for $T\left(\Gamma\right)$.
\end{proof}
Note that if $\Gamma$ satisfies the full condition and $s\le A$,
then $T\left(\Gamma\right)=0$, as each column of type $\A$ requires
one critical vertex for each row. Furthermore, as those vertices can
only be paired with each other, $\psi_{i}\left(\X\right)\in\A$ for
all $\X\in\A$. This violates the forest condition for row $i$.
\begin{cor}
\label{cor:Substructure general formula}Given a substructure $\Gamma=\left(\mathbf{w},\R_{1},\R_{2},\phi\right)$
where $\phi$ is empty and $s\ge A+2$, the number of arrowed arrays
$\left(\alpha,\phi\right)\in\AR{K}{R_{1}}{R_{2}}{s}$ that satisfy
$\Gamma$ is given by the formula
\[
T\left(\Gamma\right)=\left(s-1\right)!\left[\frac{\left(b_{2}+d_{2}\right)\left(c_{1}+d_{1}\right)}{s-A}+\frac{b_{1}c_{2}}{\left(s-A\right)\left(s-A-1\right)}\right]
\]
where $A$ is the number of columns that contains no marked cells
and at least one vertex in each row. In the case where $s=A+1$, the
formula simplifies to 
\[
T\left(\Gamma\right)=\left(s-1\right)!\left[\frac{\left(b_{2}+d_{2}\right)\left(c_{1}+d_{1}\right)}{s-A}\right]
\]
\end{cor}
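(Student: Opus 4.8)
The plan is to obtain this as a direct specialization of Theorem~\ref{thm:Substructure general formula} to the case $\phi=\emptyset$. The starting observation is that an empty $\phi$ has $\H=\emptyset$, so no column points to another; in the notation of \ref{def:Column type definition} this means the column types $\AB$, $\AP$, $\CB$, $\CP$ are all empty, and therefore $\ab_i=\ap_i=\cb_i=\cp_i=0$ for $i=1,2$, while $\K\backslash\H=\K=\A\cup\B\cup\C\cup\D$. A second observation is that an empty $\phi$ has an acyclic (indeed empty) functional digraph and creates no arrow-heads at all, so the requirements in \ref{def:Irreducible Substructure Gamma} are vacuously met: $\Gamma$ is automatically irreducible. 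The full condition required by Theorem~\ref{thm:Substructure general formula} is understood to remain in force here.

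Granting these, I would substitute $\ap_1=\cp_1=\cb_1=\cb_2=\cp_2=0$ into the formula of Theorem~\ref{thm:Substructure general formula}. The factor $\ap_1+c_1+\cp_1+d_1$ collapses to $c_1+d_1$, and the numerator $b_1(c_2+\cb_2+\cp_2)-\cb_1(b_2+d_2)$ collapses to $b_1 c_2$, giving
\[
T(\Gamma)=(s-1)!\left[\frac{(b_2+d_2)(c_1+d_1)}{s-A}+\frac{b_1 c_2}{(s-A)(s-A-1)}\right]
\]
for $s\ge A+2$, which is the asserted expression. For the boundary value $s=A+1$, the second formula of the theorem gives $T(\Gamma)=(s-1)!(b_2+d_2)(\ap_1+c_1+\cp_1+d_1)=(s-1)!(b_2+d_2)(c_1+d_1)$; since $s-A=1$ this equals $(s-1)!\,(b_2+d_2)(c_1+d_1)/(s-A)$, which is the corollary's degenerate formula.

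It remains to justify the restated description of $A$. A column of type $\A$ is, by definition, a column of $\K\backslash\H=\K$ that is unmarked in both rows; as $\phi$ is empty it carries no arrow-tail, so the object that the full condition places in each of its two cells must be a vertex, whence the column has at least one vertex in each row. Conversely, a column with no marked cell and at least one vertex in each row lies in $\K=\K\backslash\H$, is unmarked in both rows, and so is of type $\A$. Hence $A$ counts exactly the columns with no marked cells and at least one vertex in each row, as stated.

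This is a bookkeeping specialization, so I expect no genuine obstacle; the only points needing care are verifying that $\phi=\emptyset$ truly forces irreducibility (so that Theorem~\ref{thm:Substructure general formula} is applicable) and checking the $s=A+1$ degeneration against the theorem's second formula.
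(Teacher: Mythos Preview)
Your proof is correct and follows exactly the intended route: with $\phi=\emptyset$ the substructure is trivially irreducible, the arrow-related column types $\AB,\AP,\CB,\CP$ vanish, and the general formula of \ref{thm:Substructure general formula} collapses to the stated expression. The paper treats the corollary the same way, as an immediate specialization with no separate proof.

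The one point where you diverge slightly from the paper is your explicit assumption that the full condition ``remains in force.'' The paper's remark immediately following the corollary notes that the result holds \emph{without} the full condition, and that the phrasing of $A$ (``at least one vertex in each row'') has been adjusted precisely to accommodate this: columns with no arrows and no vertices can simply be deleted without affecting the forest condition, after which the theorem applies. Your justification of the description of $A$ leans on the full condition to force each $\A$-cell to contain a vertex, so as written it does not cover this slightly more general statement. This is a minor gap, and the fix is the one-line observation the paper makes.
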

Note that \ref{cor:Substructure general formula} holds even if the
full condition is not satisfied, and the definition of $A$ has been
adjusted to match this. This stems from the fact that we can remove
columns with no arrows or vertices without impacting the forest condition. 

\section{\label{sec:Two Row Vertical}Enumerating Proper Vertical Arrays}

Finally, we are ready to compute the formula for $v_{K;R_{1},R_{2}}^{\left(s\right)}$
using arrowed arrays. As proper vertical arrays are arrowed arrays
that satisfies the non-empty, balance, and forest conditions that
contain no arrows, we can take $\phi$ to be empty and $\mathbf{w}$
to be a vector of size $K$. To enumerate proper vertical arrays,
we will define a coarser substructure, which we will compute the formula
for using our formula of $T\left(\Gamma\right)$.
\begin{defn}
\label{def:Substructure Omega}Let $\mathbf{w}$ be a non-negative
vector. The substructure $\Omega=\left(\mathbf{w}\right)$ is defined
to be the subset of $\PVA{K}{R_{1},R_{2}}{s}$ that satisfies the
non-empty and balance conditions, such that for each pair $\alpha\in\PVA{K}{R_{1},R_{2}}{s}$,
$\alpha$ contains $w_{j}$ vertices in both cells $\left(1,j\right)$
and $\left(2,j\right)$. For a given substructure $\Omega=\left(\mathbf{w}\right)$
and $A\ge0$, we define $\Omega_{A}$ to be the substructure that
describes the set of arrowed arrays that satisfies $\Omega$, and
have exactly $A$ (non-empty) columns of type $\A$. For convenience,
we say a substructure $\Gamma$ is a \emph{refinement} of another
substructure $\Omega$ if the set of arrowed arrays satisfying $\Gamma$
is a subset of the arrowed arrays satisfying $\Omega$. We denote
it as $\Gamma\hookrightarrow\Omega$. Furthermore, if $\Gamma_{1},\dots,\Gamma_{t}$
is a set of substructures that are refinements of a substructure $\Omega$,
we say that $\Gamma_{1},\dots,\Gamma_{t}$ \emph{partitions} $\Omega$
if the sets of arrowed arrays satisfying the $\Gamma_{i}$'s are mutually
disjoint, and their union is the set of arrowed arrays that satisfy
$\Omega$.
\end{defn}
By considering all possible $R_{1}$-subsets $\R_{1}$ and $R_{2}$-subsets
$\R_{2}$, we see that the set of substructures of the form $\Gamma=\left(\left[\mathbf{w},\mathbf{w}\right],\R_{1},\R_{2},\emptyset\right)$
partition the substructure $\Omega$. Furthermore, the subset of substructures
with exactly $A$ columns of type $\A$ partitions $\Omega_{A}$,
which in turn partitions $\Omega$ by taking $A$ from 0 to $s-1$.
With the substructure $\Omega=\left(\mathbf{w}\right)$ defined, we
will now provide a formula for it, which we will use to decompose
vertical arrays into arrowed arrays.
\begin{thm}
\label{thm:Substructure Omega General Formula}Let $R_{1},R_{2}\ge1$,
and let $\Omega=\left(\mathbf{w}\right)$ be a substructure with $F$
columns that contains vertices, denoted $\F$. Then, the number of
vertical arrays $\alpha\in\PVA{K}{R_{1},R_{2}}{s}$ satisfying the
substructure $\Omega$ is given by the formula
\[
T\left(\Omega\right)=s!\sum_{A=0}^{s-1}\frac{s}{s-A}\binom{F-1}{A}\binom{K-A-1}{K-A-R_{1},K-A-R_{2},R_{1}+R_{2}-K+A-1}
\]
where $\binom{a+b+c}{a,b,c}=\frac{\left(a+b+c\right)!}{a!b!c!}$ is
the multinomial coefficient.
\end{thm}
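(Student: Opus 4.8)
The plan is to compute $T(\Omega)$ by summing the substructure formula from \ref{cor:Substructure general formula} over all the finer substructures $\Gamma = \left(\left[\mathbf{w},\mathbf{w}\right], \R_1, \R_2, \emptyset\right)$ that partition $\Omega$, organized by the parameter $A$. Fix $A$ with $0 \le A \le s-1$; I want to count, for each choice of marked sets $\R_1, \R_2$, the contribution of those $\Gamma$ that have exactly $A$ columns of type $\A$, and then sum the value of $T(\Gamma)$ given by the corollary. The key observation is that since $\phi$ is empty, the eight column types collapse to the four types $\A, \B, \C, \D$ (columns not in $\F$ have no vertices, and by the remark after \ref{cor:Substructure general formula} such empty columns can be ignored with $A$ redefined to count only non-empty type-$\A$ columns). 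So a substructure $\Gamma$ refining $\Omega$ is determined by how the $F$ columns of $\F$ are distributed among types $\A, \B, \C, \D$, together with how the remaining marked cells (beyond $\F$) are placed among the $K - F$ empty columns.

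First I would set up the combinatorial bookkeeping. Among the $F$ non-empty columns, suppose $a$ are type $\A$, $b$ type $\B$, $c$ type $\C$, $d$ type $\D$, with $a + b + c + d = F$; here $a = A$ is fixed. Then the remaining $R_1 - (b+d)$ row-1 marks and $R_2 - (c+d)$ row-2 marks must land in the $K - F$ empty columns, and for the array to lie in $\PVA{K}{R_1,R_2}{s}$ every empty column must be marked in at least one row (non-empty condition), so the empty columns split into three groups: marked only in row 1, only in row 2, or in both. This gives a multinomial count $\binom{K-F}{\ast,\ast,\ast}$, which after the substitutions should reorganize into the factor $\binom{K-A-1}{\,\cdot\,,\,\cdot\,,\,\cdot\,}$ once everything is summed. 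Meanwhile \ref{cor:Substructure general formula} gives $T(\Gamma) = (s-1)!\left[\frac{(b_2+d_2)(c_1+d_1)}{s-A} + \frac{b_1 c_2}{(s-A)(s-A-1)}\right]$ where the subscripted variables are vertex counts in the relevant column types; by the balance condition $x_1 = x_2 = x$ for every column, so $b_1 = b_2$, etc., and these are sums of the $w_j$ over columns of the given type.

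The main obstacle — and the bulk of the work — is the double summation: I must sum $T(\Gamma)$ over all assignments of the $F$ columns of $\F$ to types $\A, \B, \C, \D$ (with exactly $A$ of them type $\A$) and all placements of the surplus marks in the empty columns, and show the result collapses to the stated closed form. The vertex-count weights $(b_2+d_2)$, $(c_1+d_1)$, $b_1 c_2$ depend on \emph{which} columns get which type, not just how many, so the sum is genuinely weighted by the $w_j$'s; the trick will be that summing a weight like $b_2 + d_2 = \sum_{j \in \B \cup \D} w_j$ over all type-assignments, by linearity, reduces to: pick the column $j$ contributing its $w_j$, declare it type $\B$ or $\D$ (so marked in row 1), and freely distribute the other $F-1$ non-empty columns. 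Doing this for each of the two terms in $T(\Gamma)$ and carefully matching the marked-cell constraints, I expect the $\sum_j w_j$ factors to come out and cancel against something — or rather, I suspect the intended statement treats $T(\Omega)$ as already incorporating a sum over vertex configurations, so that these weights effectively count placements of vertices and the whole thing telescopes. I would verify the algebra against a small case (e.g. $s = 2$, small $K$, $F$) to pin down the exact combinatorial identity, then push the general Vandermonde-type summation through; the factor $\frac{s}{s-A}$ out front should emerge from combining the $\frac{1}{s-A}$ and $\frac{1}{(s-A)(s-A-1)}$ denominators in \ref{cor:Substructure general formula} after the type-sums are performed.

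Finally, I would reconcile indices: the $\binom{F-1}{A}$ should come from choosing which non-empty columns are the $A$ type-$\A$ ones after one column has been "used up" as a weight carrier, and the multinomial $\binom{K-A-1}{K-A-R_1,\,K-A-R_2,\,R_1+R_2-K+A-1}$ should be the count of ways to assign row-1-only, row-2-only, and both-rows marks across the $K - A - 1$ columns that are not type $\A$ and not the distinguished weight column, which is exactly the standard "stars and bars with three bins" after imposing $R_1, R_2$ and the non-empty condition. The prefactor $s!$ absorbs $(s-1)!$ together with the $s$ that appears when the weighted type-sums are evaluated. I expect no conceptual novelty beyond \ref{cor:Substructure general formula} itself — the difficulty is entirely in executing the multi-index summation cleanly and verifying that the messy intermediate expressions genuinely collapse.
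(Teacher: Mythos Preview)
Your overall strategy matches the paper's: partition $\Omega$ into the substructures $\Gamma=\left(\left[\mathbf{w},\mathbf{w}\right],\R_{1},\R_{2},\emptyset\right)$, group them by the number $A$ of non-empty type-$\A$ columns, and sum the formula from \ref{cor:Substructure general formula}. Your handling of the non-empty condition (empty columns must land in $\B\cup\C\cup\D$) is also correct and is exactly what the paper does, just phrased differently: the paper chooses the $A$ type-$\A$ columns from $\F$ and then distributes the remaining $K-A$ columns freely among $\B,\C,\D$.

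However, there is a genuine gap in how you plan to handle the weighted sum. The weights in \ref{cor:Substructure general formula} are not linear but \emph{bilinear}: the term $(b_{2}+d_{2})(c_{1}+d_{1})$ is a product of two vertex-count sums, so ``pick the column $j$ contributing its $w_{j}$'' is not enough --- you must distinguish \emph{two} columns (one carrying a row-$1$ vertex, one carrying a row-$2$ vertex), and these may coincide. The paper makes this explicit by interpreting each product as choosing an ordered pair of vertices $(v,u)$ with $v$ in some cell $(1,\X)$ and $u$ in some cell $(2,\Y)$, and then splitting into cases according to the types of $\X$ and $\Y$ and whether $\X=\Y$. This produces a diagonal contribution proportional to $w\coloneqq\sum_{j}w_{j}^{2}$, which does \emph{not} obviously cancel; the paper's key step is an index shift $A\mapsto A-1$ on the $\frac{b_{1}c_{2}}{(s-A)(s-A-1)}$ piece, after which the $w$-dependent parts from the four pieces cancel exactly, leaving the $w$-free closed form. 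Your proposal does not identify this mechanism, and your guess that ``$T(\Omega)$ already incorporates a sum over vertex configurations'' is a misreading --- $\Omega$ fixes $\mathbf{w}$, and the answer genuinely does not depend on $\mathbf{w}$ beyond $s$ and $F$, but only because of this cancellation.

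A smaller point: your expectation that $\binom{F-1}{A}$ arises from ``one column used up as weight carrier'' is not quite right. The individual pieces carry $\binom{F-2}{A}$ (two distinguished columns, $\X\ne\Y$) or $\binom{F-1}{A}$ (diagonal, $\X=\Y$); the uniform $\binom{F-1}{A}$ in the final answer appears only after the recombination and index shift, via the Pascal-type identity $\binom{F-2}{A}+\binom{F-2}{A-1}=\binom{F-1}{A}$.
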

\begin{proof}
To prove this theorem, we sum $T\left(\Omega\right)$ over all substructures
$\Gamma=\left(\left[\mathbf{w},\mathbf{w}\right],\R_{1},\R_{2},\emptyset\right)$
that are refinements of $\Omega$. Note that $T\left(\Gamma\right)$
as given in \ref{cor:Substructure general formula} only depends on
the number non-empty of columns of type $\A$, even though it depends
on the number of vertices of other column types. Therefore, we first
sum over all $\Gamma$ with $A$ non-empty columns of type $\A$ to
obtain $T\left(\Omega_{A}\right)$, then we sum $A$ from 0 to $s-1$
to obtain $T\left(\Omega\right)$. As $\Omega$ satisfies the balance
condition, so must all $\Gamma$ that are refinements of $\Omega$.
This implies that we can drop the subscripts from $T\left(\Gamma\right)$.
For convenience, we will refer to the number of vertices of row 1
in a set of column $\X$ simply as the number of vertices in $\X$,
as that number is the same between row 1 or row 2.

Now, let $\Gamma$ be a refinement of $\Omega$, and suppose $\Gamma$
have $A$, $B$, $C$, and $D$ columns of type $\A$, $\B$, $\C$,
and $\D$, respectively. Then, as the columns marked in row 1 are
type $\B$ and $\D$, and the columns marked in row 2 are type $\C$
and $\D$, we have
\begin{eqnarray*}
B & = & K-A-R_{2}\\
C & = & K-A-R_{1}\\
D & = & R_{1}+R_{2}-K+A
\end{eqnarray*}
Therefore, there are 
\[
\binom{F}{A}\binom{K-A}{K-A-R_{2},K-A-R_{1},R_{1}+R_{2}-K+A}
\]
substructures $\Gamma$ that are refinements of $\Omega_{A}$. Note
that the columns of type $\A$ must be non-empty, as they must be
a subset of the columns of $\F$.

Now, we can rewrite $T\left(\Gamma\right)$ as
\begin{eqnarray*}
T\left(\Gamma\right) & = & \left(s-1\right)!\left[\frac{bd}{s-A}+\frac{cd}{s-A}+\frac{d^{2}}{s-A}+\frac{bc}{s-A-1}\right]\\
 & = & \left(s-1\right)!\left(T_{1}\left(\Gamma\right)+T_{2}\left(\Gamma\right)+T_{3}\left(\Gamma\right)+T_{4}\left(\Gamma\right)\right)
\end{eqnarray*}
for $0\le A\le s-2$. For $A=s-1$, we let $T_{4}\left(\Gamma\right)=bc=0$,
as $A=s-1$ means there are $s-1$ columns of type $\A$, which means
that the remaining non-empty column cannot be both type $\B$ and
type $\C$ at the same time. As the substructures $\Gamma$ with $A$
columns of type $\A$ partitions $\Omega_{A}$, we can let $T_{i}\left(\Omega_{A}\right)=\sum_{\Gamma\hookrightarrow\Omega_{A}}T_{i}\left(\Gamma\right)$
for $i=1,2,3,4$, which gives us 
\[
T\left(\Omega\right)=\left(s-1\right)!\left(\sum_{A=0}^{s-1}T_{1}\left(\Omega_{A}\right)+\sum_{A=0}^{s-1}T_{2}\left(\Omega_{A}\right)+\sum_{A=0}^{s-1}T_{3}\left(\Omega_{A}\right)+\sum_{A=0}^{s-2}T_{4}\left(\Omega_{A}\right)\right)
\]

Now, let $\left\{ v,u\right\} $ be a pair of vertices such that $v$
and $u$ are in cell $\left(1,\X\right)$ and cell $\left(2,\Y\right)$,
respectively. If $\X$ and $\Y$ are distinct columns, then $\left(v,u\right)$
contributes to $T_{1}\left(\Gamma\right)$ if and only if $\X$ is
of type $\C$ and $\Y$ is of type $\B$. To have $A$ columns of
$\A$, exactly $A$ of the remaining $F-2$ columns of $\F$ must
be unmarked. Then, the remaining columns must be of $\B$, $\C$,
and $\D$, which can be arbitrarily chosen from the remaining $K-A-2$
columns. If we let $w=w_{1}^{2}+\cdots+w_{K}^{2}$ be the squares
of the number of vertices in each column, then we have 
\[
T_{1}\left(\Omega_{A}\right)=\frac{s^{2}-w}{s-A}\binom{F-2}{A}\binom{K-A-2}{K-A-R_{1},K-A-R_{2}-1,R_{1}+R_{2}-K+A-1}
\]
Similar calculations give us 
\begin{eqnarray*}
T_{2}\left(\Omega_{A}\right) & = & \frac{s^{2}-w}{s-A}\binom{F-2}{A}\binom{K-A-2}{K-A-R_{1}-1,K-A-R_{2},R_{1}+R_{2}-K+A-1}\\
T_{4}\left(\Omega_{A}\right) & = & \frac{s^{2}-w}{s-A-1}\binom{F-2}{A}\binom{K-A-2}{K-A-R_{1}-1,K-A-R_{2}-1,R_{1}+R_{2}-K+A}
\end{eqnarray*}
To obtain $T_{3}\left(\Omega_{A}\right)$, we break it up into 2 cases,
depending on whether $\X=\Y$. If $\X=\Y$, we have
\[
T_{3a}\left(\Omega_{A}\right)=\frac{w}{s-A}\binom{F-1}{A}\binom{K-A-2}{K-A-R_{1},K-A-R_{2},R_{1}+R_{2}-K+A-1}
\]
Otherwise, we get 
\[
T_{3b}\left(\Omega_{A}\right)=\frac{s^{2}-w}{s-A}\binom{F-2}{A}\binom{K-A-2}{K-A-R_{1},K-A-R_{2},R_{1}+R_{2}-K+A-2}
\]

To sum over $A$, we shift the index of $T_{4}\left(\Omega_{A}\right)$
by 1, and observe that for $1\le A\le s-1$, we have 
\begin{eqnarray*}
 &  & T_{1}\left(\Omega_{A}\right)+T_{2}\left(\Omega_{A}\right)+T_{3a}\left(\Omega_{A}\right)+T_{3b}\left(\Omega_{A}\right)+T_{4}\left(\Omega_{A-1}\right)\\
 & = & \frac{s^{2}}{s-A}\binom{F-1}{A}\binom{K-A-1}{K-A-R_{1},K-A-R_{2},R_{1}+R_{2}-K+A-1}
\end{eqnarray*}
which is independent of $w$. Furthermore, for $A=0$, we have
\begin{eqnarray*}
 &  & T_{1}\left(\Omega_{0}\right)+T_{2}\left(\Omega_{0}\right)+T_{3}\left(\Omega_{0}\right)\\
 & = & s\binom{K-1}{K-R_{1},K-R_{2},R_{1}+R_{2}-K-1}
\end{eqnarray*}
which is in agreement with the previous sum. Therefore, we have the
formula for $T\left(\Omega\right)$ as 
\[
T\left(\Omega\right)=\sum_{A=0}^{s-1}\frac{s\cdot s!}{s-A}\binom{F-1}{A}\binom{K-A-1}{K-A-R_{1},K-A-R_{2},R_{1}+R_{2}-K+A-1}
\]
\end{proof}
To obtain the formula for the number of vertical arrays in Goulden
and Slofstra, we need to sum over all possible ways of placing $s$
points into $K$ columns. Doing so gives us the following theorem.
\begin{thm}
\label{thm:Vertical Array Formula}Let $s,K,R_{1},R_{2}\ge1$. Then,
\begin{eqnarray*}
v_{K;R_{1},R_{2}}^{\left(s\right)} & = & \frac{\left(s+R_{1}-1\right)!\left(s+R_{2}-1\right)!}{\left(s+R_{1}+R_{2}-2\right)!}\cdot\binom{s+R_{1}+R_{2}-2}{K-1}\times\\
 &  & \left[\binom{K-1}{R_{1}-1}\binom{K-1}{R_{2}-1}-\binom{K-1}{s+R_{1}-1}\binom{K-1}{s+R_{2}-1}\right]
\end{eqnarray*}
\end{thm}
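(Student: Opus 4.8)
The plan is to obtain $v_{K;R_1,R_2}^{(s)}$ by summing the formula of \ref{thm:Substructure Omega General Formula} over all vertex distributions and then reducing the resulting double sum to a single hypergeometric sum. Since a proper vertical array satisfies the balance condition, it is determined by a vector $\mathbf w=(w_1,\dots,w_K)$ of non-negative integers with $w_1+\dots+w_K=s$ (each row carries $p_i=s$ vertices), and it automatically satisfies the non-empty condition, a column with no vertex still carrying a box. Hence $\PVA{K}{R_1,R_2}{s}$ is the disjoint union of the substructures $\Omega=(\mathbf w)$, so $v_{K;R_1,R_2}^{(s)}=\sum_{\mathbf w}T(\Omega)$. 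By \ref{thm:Substructure Omega General Formula} the value $T(\Omega)$ depends on $\mathbf w$ only through the number $F$ of columns containing a vertex, and there are $\binom KF\binom{s-1}{F-1}$ vectors $\mathbf w$ with exactly $F$ nonzero entries; so the first step is to write
\[
v_{K;R_1,R_2}^{(s)}=s!\sum_{F\ge1}\binom KF\binom{s-1}{F-1}\sum_{A=0}^{s-1}\frac{s}{s-A}\binom{F-1}{A}\binom{K-A-1}{K-A-R_1,\,K-A-R_2,\,R_1+R_2-K+A-1}.
\]

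The next step is to interchange the summations and evaluate the inner sum over $F$. Using $\binom{s-1}{F-1}\binom{F-1}{A}=\binom{s-1}{A}\binom{s-1-A}{F-1-A}$ and then Vandermonde's convolution, $\sum_F\binom KF\binom{s-1}{F-1}\binom{F-1}{A}=\binom{s-1}{A}\binom{K+s-1-A}{K-A-1}$; since $\frac{s}{s-A}\binom{s-1}{A}=\binom sA$, this collapses the double sum to
\[
v_{K;R_1,R_2}^{(s)}=s!\sum_{A=0}^{s-1}\binom sA\binom{K+s-1-A}{s}\binom{K-A-1}{K-A-R_1,\,K-A-R_2,\,R_1+R_2-K+A-1}.
\]
Now I would bring this into a recognizable shape. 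One has $\binom sA\binom{K+s-1-A}{s}=\binom{K-1}A\binom{K+s-1-A}{K-1}$, while the product $\binom{K-1}A\binom{K-A-1}{K-A-R_1,K-A-R_2,R_1+R_2-K+A-1}$ regroups, through repeated use of $\binom nm\binom mj=\binom nj\binom{n-j}{m-j}$, as $\binom{K-1}{R_1-1}\binom{K-R_1}{A}\binom{R_1-1}{K-A-R_2}$. After the substitution $\ell=K-R_1-A$ this gives
\[
v_{K;R_1,R_2}^{(s)}=s!\binom{K-1}{R_1-1}\sum_{\ell=K-R_1-s+1}^{K-R_1}\binom{s+R_1-1+\ell}{K-1}\binom{K-R_1}{\ell}\binom{R_1-1}{R_2-1-\ell}.
\]

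The hard part is the evaluation of this sum, and this is where the minus sign in the statement comes from. To carry it out, I would observe that the ratio of consecutive summands is a ratio of cubics in $\ell$, so the sum taken over all $\ell\in\mathbb Z$ is a terminating, balanced (Saalschützian) ${}_3F_2$ at argument $1$; the Pfaff–Saalschütz theorem evaluates this complete sum, and after converting Pochhammer symbols to factorials, $s!\binom{K-1}{R_1-1}$ times it equals $\frac{(s+R_1-1)!(s+R_2-1)!}{(s+R_1+R_2-2)!}\binom{s+R_1+R_2-2}{K-1}\binom{K-1}{R_1-1}\binom{K-1}{R_2-1}$, the first product in the bracket. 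Finally, the summation window $\ell\ge K-R_1-s+1$ omits exactly one term of the full range, namely $\ell=K-R_1-s$, for which $\binom{s+R_1-1+\ell}{K-1}=\binom{K-1}{K-1}=1$, and a short factorial manipulation shows that $s!\binom{K-1}{R_1-1}$ times this omitted term equals $\frac{(s+R_1-1)!(s+R_2-1)!}{(s+R_1+R_2-2)!}\binom{s+R_1+R_2-2}{K-1}\binom{K-1}{s+R_1-1}\binom{K-1}{s+R_2-1}$, the second product; subtracting yields the claimed formula (in the degenerate parameter ranges the omitted term and the second product both vanish, which is checked directly). The only real obstacle is keeping the chain of binomial reindexings straight and pinning down this boundary term; should that prove cumbersome, the final single-sum identity can instead be certified by Zeilberger's algorithm or by an induction on $s$.
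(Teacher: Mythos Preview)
Your proposal is correct and follows essentially the same route as the paper. Both arguments sum the formula of \ref{thm:Substructure Omega General Formula} over all vertex distributions $\mathbf{w}$ (equivalently, over $F$), collapse the $F$-sum by Chu--Vandermonde to a single sum in $A$, extend the range by the single missing term $A=s$ (your $\ell=K-R_1-s$), and evaluate the completed sum by the Pfaff--Saalsch\"utz identity; your extra reindexing from $A$ to $\ell$ is cosmetic, and the paper applies the ${}_3F_2$ evaluation directly in the $A$ variable.
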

\begin{proof}
For $F\ge0$, there are $\binom{K}{F}$ to choose $\F$ columns so
that each of them contains at least one vertex, and there are $\binom{s-1}{F-1}$
ways to distribute $s$ vertices into those columns. Hence, the number
of proper vertical arrays satisfying the non-empty condition is
\begin{eqnarray*}
v_{K;R_{1},R_{2}}^{\left(s\right)} & = & \sum_{F\ge0}\binom{K}{F}\binom{s-1}{F-1}T\left(\Omega\right)\\
 & = & s!\sum_{A=0}^{s-1}\sum_{F\ge0}\binom{K}{F}\binom{s}{A}\binom{s-A-1}{s-F}\binom{K-A-1}{K-A-R_{1},K-A-R_{2},R_{1}+R_{2}-K+A-1}\\
 & = & \sum_{A=0}^{s-1}\binom{s}{A}\frac{\left(s+K-A-1\right)!}{\left(K-A-R_{1}\right)!\left(K-A-R_{2}\right)!\left(R_{1}+R_{2}-K+A-1\right)!}
\end{eqnarray*}
by the Chu-Vandermonde identity (pg. 67 of \cite{Andrews-Askey-Roy:1999}).
As the binomial coefficient $\binom{s}{A}$ implies that the natural
upper bound of the sum is $A$, we can rewrite this sum as 
\begin{eqnarray*}
v_{K;R_{1},R_{2}}^{\left(s\right)} & = & \sum_{A\ge0}\binom{s}{A}\frac{\left(s+K-A-1\right)!}{\left(K-A-R_{1}\right)!\left(K-A-R_{2}\right)!\left(R_{1}+R_{2}-K+A-1\right)!}\\
 &  & -\frac{\left(K-1\right)!}{\left(K-s-R_{1}\right)!\left(K-s-R_{2}\right)!\left(R_{1}+R_{2}-K+s-1\right)!}
\end{eqnarray*}
Now, by the Pfaff-Saalschütz identity (pg. 69 of \cite{Andrews-Askey-Roy:1999}),
we can rewrite the first part as 
\begin{eqnarray*}
 &  & \sum_{A\ge0}\binom{s}{A}\frac{s!\left(s+K-A-1\right)!}{\left(K-A-R_{1}\right)!\left(K-A-R_{2}\right)!\left(R_{1}+R_{2}-K+A-1\right)!}\\
 & = & _{3}F_{2}\left({-s,-K+R_{1},-K+R_{2}\atop R_{1}+R_{2}-K,-s-K+1};1\right)\frac{s!\left(s+K-1\right)!}{\left(K-R_{1}\right)!\left(K-R_{2}\right)!\left(R_{1}+R_{2}-K-1\right)!}\\
 & = & \frac{s!\left(R_{1}+s-1\right)!\left(R_{2}+s-1\right)!\left(K-1\right)!}{\left(R_{1}-1\right)!\left(R_{2}-1\right)!\left(R_{1}+R_{2}-K+s-1\right)!\left(K-R_{1}\right)!\left(K-R_{2}\right)!}
\end{eqnarray*}
Combined together, this gives the formula for $v_{K;R_{1},R_{2}}^{\left(s\right)}$
that appears in the Goulden and Slofstra paper.
\end{proof}

\section{\label{sec:Goulden Slofstra Reduction}Further Reduction to the Goulden-Slofstra
Formula}

In this section, we will show a method of reducing the number of sums
in the formula of Goulden and Slofstra using Pfaff's identity. We
start by rewriting \ref{thm:Goulden-Slofstra} as $A_{2}^{\left(q_{1},q_{2};s\right)}\left(x\right)=g_{1}-g_{2}$
using our notation, where
\begin{eqnarray*}
g_{1} & = & \sum_{k=1}^{d+1}\sum_{t_{1}\ge0}\sum_{t_{2}\ge0}\frac{\left(2q_{1}+s\right)!\left(2q_{2}+s\right)!\left(k-1\right)!}{2^{t_{1}+t_{2}}t_{1}!t_{2}!\left(d-t_{1}-t_{2}-k+1\right)!}\cdot\binom{x}{k}\times\\
 &  & \frac{1}{\left(q_{1}-t_{1}\right)!\left(k-q_{1}+t_{1}-1\right)!\left(q_{2}-t_{2}\right)!\left(k-q_{2}+t_{2}-1\right)!}\\
g_{2} & = & \sum_{k=1}^{d+1}\sum_{t_{1}\ge0}\sum_{t_{2}\ge0}\frac{\left(2q_{1}+s\right)!\left(2q_{2}+s\right)!\left(k-1\right)!}{2^{t_{1}+t_{2}}t_{1}!t_{2}!\left(d-t_{1}-t_{2}-k+1\right)!}\cdot\binom{x}{k}\times\\
 &  & \frac{1}{\left(s+q_{1}-t_{1}\right)!\left(k-s-q_{1}+t_{1}-1\right)!\left(s+q_{2}-t_{2}\right)!\left(k-s-q_{2}+t_{2}-1\right)!}
\end{eqnarray*}
with $d=q_{1}+q_{2}+s$ as in the original theorem. Note that we have
removed the upper bounds for $t_{1}$ and $t_{2}$, as the summation
terms can only be non-zero if both $t_{1}\le q_{1}$ and $t_{2}\le q_{2}$
hold. To reduce the number of sums in $g_{1}$ and $g_{2}$, we manipulate
them separately with the same transforms. We first use Pfaff's identity
to transform the sum involving $t_{1}$, then use the Chu-Vandermonde
identity to eliminate $t_{2}$. Afterwards, we make the summation
variables symmetric by making a substitution for $k$, before combining
the results together. For reference, the identities used for this
procedure can be found in pg. 67 and pg. 69 of \cite{Andrews-Askey-Roy:1999}.

By rewriting the $t_{1}$ sum of $g_{1}$ using the standard notation
for hypergeometric series and using Pfaff's identity, we have

\begin{eqnarray*}
g_{1} & = & \sum_{k=1}^{d+1}\sum_{t_{2}\ge0}\frac{1}{2^{t_{2}}t_{2}!}\cdot\binom{x}{k}{}_{2}F_{1}\left({-d+t_{2}+k-1,-q_{1}\atop k-q_{1}};\frac{1}{2}\right)\times\\
 &  & \frac{\left(2q_{1}+s\right)!\left(2q_{2}+s\right)!\left(k-1\right)!}{\left(d-t_{2}-k+1\right)!q_{1}!\left(k-q_{1}-1\right)!\left(q_{2}-t_{2}\right)!\left(k-q_{2}+t_{2}-1\right)!}\\
 & = & \sum_{k=1}^{d+1}\sum_{t_{2}\ge0}\frac{1}{2^{t_{2}}t_{2}!}\cdot\binom{x}{k}\left(1-\frac{1}{2}\right)^{d-t_{2}-k+1}{}_{2}F_{1}\left({-d+t_{2}+k-1,k\atop k-q_{1}};-1\right)\times\\
 &  & \frac{\left(2q_{1}+s\right)!\left(2q_{2}+s\right)!\left(k-1\right)!}{\left(d-t_{2}-k+1\right)!q_{1}!\left(k-q_{1}-1\right)!\left(q_{2}-t_{2}\right)!\left(k-q_{2}+t_{2}-1\right)!}\\
 & = & \sum_{k=1}^{d+1}\sum_{t_{2}\ge0}\sum_{t_{1}\ge0}\frac{1}{2^{d-k+1}t_{1}!t_{2}!\left(d-t_{1}-t_{2}-k+1\right)!}\cdot\binom{x}{k}\times\\
 &  & \frac{\left(2q_{1}+s\right)!\left(2q_{2}+s\right)!\left(k+t_{1}-1\right)!}{q_{1}!\left(k-q_{1}+t_{1}-1\right)!\left(q_{2}-t_{2}\right)!\left(k-q_{2}+t_{2}-1\right)!}
\end{eqnarray*}
While there is no upper bound for $t_{1}$, the term $\left(d-t_{1}-t_{2}-k+1\right)!$
in the denominator causes the sum to terminate. Furthermore, for the
summation term to be non-zero, we must have $d-t_{1}-t_{2}-k+1\ge0$
and $k-q_{2}+t_{2}-1\ge0$ at the same time. Combining these inequalities
together gives us $t_{1}\le q_{1}+s$, which can be used as an upper
bound for $t_{1}$. Next, we rewrite the $t_{2}$ sum as a hypergeometric
series, and note that it satisfies the Chu-Vandermonde identity. This
yields,
\begin{eqnarray*}
g_{1} & = & \sum_{k=1}^{d+1}\sum_{t_{1}=0}^{q_{1}+s}\frac{1}{2^{d-k+1}t_{1}!}\cdot\binom{x}{k}{}_{2}F_{1}\left({-q_{2},-d+t_{1}+k-1\atop k-q_{2}};1\right)\times\\
 &  & \frac{\left(2q_{1}+s\right)!\left(2q_{2}+s\right)!\left(k+t_{1}-1\right)!}{\left(d-t_{1}-k+1\right)!q_{1}!\left(k-q_{1}+t_{1}-1\right)!q_{2}!\left(k-q_{2}-1\right)!}\\
 & = & \sum_{k=1}^{d+1}\sum_{t_{1}=0}^{q_{1}+s}\frac{\left(d-t_{1}\right)!}{2^{d-k+1}t_{1}!\left(d-t_{1}-k+1\right)!}\cdot\binom{x}{k}\times\\
 &  & \frac{\left(2q_{1}+s\right)!\left(2q_{2}+s\right)!\left(k+t_{1}-1\right)!}{q_{1}!\left(s+q_{1}-t_{1}\right)!\left(k-q_{1}+t_{1}-1\right)!q_{2}!\left(k-1\right)!}
\end{eqnarray*}
Note that the term $\left(d-t_{1}-k+1\right)!$ in the denominator
means that for $k>d-t_{1}+1$, the summation term is zero. Therefore,
we can switch the two sums and lower the upper bound of $k$ to $d-t_{1}+1$.
Next, the terms $\left(k-q_{1}+t_{1}-1\right)!$ and $\left(k-1\right)!$
in the denominator means that for the summand to be non-zero, we have
$k\ge\max\left\{ q_{1}-t_{1}+1,1\right\} $. Hence, we can change
the lower bound of $k$ to $q_{1}-t_{1}+1$. As $k+t_{1}-1\ge q_{1}\ge0$
with this new lower bound, the factorial term in the numerator remains
non-negative. After changing the bounds, we can reverse the sum with
the substitution $k=d-t_{1}-t_{2}+1$. This gives us the formula
\begin{eqnarray}
g_{1} & = & \sum_{t_{1}=0}^{q_{1}+s}\sum_{t_{2}=0}^{q_{2}+s}\frac{\left(d-t_{1}\right)!\left(d-t_{2}\right)!\left(2q_{1}+s\right)!\left(2q_{2}+s\right)!}{2^{t_{1}+t_{2}}t_{1}!t_{2}!\left(d-t_{1}-t_{2}\right)!}\cdot\binom{x}{d-t_{1}-t_{2}+1}\times\label{eq:g1 formula}\\
 &  & \frac{1}{q_{1}!q_{2}!\left(s+q_{1}-t_{1}\right)!\left(s+q_{2}-t_{2}\right)!}\nonumber 
\end{eqnarray}
which is symmetric between $t_{1}$ and $t_{2}$.

We now apply the same transformations to $g_{2}$. However, instead
of changing the upper bound to $t_{1}\le q_{1}+s$, we have $t_{1}\le q_{1}$.
Then, after applying the Chu-Vandermonde identity, we can tighten
the bounds of $k$ to $q_{1}+s-t_{1}+1\le k\le d-t_{1}+1$. Finally,
we can reverse the sum with the substitution $k=d-t_{1}-t_{2}+1$.
This gives us the formula
\begin{eqnarray}
g_{2} & = & \sum_{t_{1}=0}^{q_{1}}\sum_{t_{2}=0}^{q_{2}}\frac{\left(d-t_{1}\right)!\left(d-t_{2}\right)!\left(2q_{1}+s\right)!\left(2q_{2}+s\right)!}{2^{t_{1}+t_{2}}t_{1}!t_{2}!\left(d-t_{1}-t_{2}\right)!}\cdot\binom{x}{d-t_{1}-t_{2}+1}\times\label{eq:g2 formula}\\
 &  & \frac{1}{\left(q_{1}+s\right)!\left(q_{2}+s\right)!\left(q_{1}-t_{1}\right)!\left(q_{2}-t_{2}\right)!}\nonumber 
\end{eqnarray}
which is again symmetric in $t_{1}$ and $t_{2}$.

As we have $\left(q_{1}-t_{1}\right)!$ and $\left(q_{2}-t_{2}\right)!$
in the denominator of $g_{2}$, we can actually increase the bounds
of $t_{1}$ and $t_{2}$ to $q_{1}+s$ and $q_{2}+s$ without changing
the sum, matching the bounds of $g_{1}$. Finally, we can put \ref{eq:g1 formula}
and \ref{eq:g2 formula} together and obtain 
\begin{eqnarray*}
A_{2}^{\left(q_{1},q_{2};s\right)}\left(x\right) & = & g_{1}-g_{2}\\
 & = & \sum_{t_{1}=0}^{q_{1}+s}\sum_{t_{2}=0}^{q_{2}+s}\frac{\left(d-t_{1}\right)!\left(d-t_{2}\right)!\left(2q_{1}+s\right)!\left(2q_{2}+s\right)!}{2^{t_{1}+t_{2}}t_{1}!t_{2}!\left(d-t_{1}-t_{2}\right)!}\cdot\binom{x}{d-t_{1}-t_{2}+1}\times\\
 &  & \left[\frac{1}{q_{1}!q_{2}!\left(s+q_{1}-t_{1}\right)!\left(s+q_{2}-t_{2}\right)!}-\frac{1}{\left(q_{1}+s\right)!\left(q_{2}+s\right)!\left(q_{1}-t_{1}\right)!\left(q_{2}-t_{2}\right)!}\right]
\end{eqnarray*}
where $d=q_{1}+q_{2}+s$.

\section{Acknowledgements}

Many thanks for the help of I.P. Goulden for supporting me in my doctoral
studies, during which this research is conducted, as well as the editing
and verifying of the results in this paper.

\bibliographystyle{plain}
\bibliography{uw-ethesis}

\end{document}